\newtheorem{theorem}[equation]{Theorem}
\newtheorem{definition}[equation]{Definition}
\newtheorem{proposition}[equation]{Proposition}
\newtheorem{cor}[equation]{Corollary}
\newtheorem{lemma}[equation]{Lemma}
\newtheorem{remark}[equation]{Remark}
\begin{document}
\title{A Hard-Analytic Proof of ``Most" Polynomial Wiener-Wintner Theorems for Infinite Measure Spaces}

\author{Ben Krause}
\address{
Department of Mathematics,
University of Bristol \\
Beacon House, Queens Rd, Bristol BS8 1QU}
\email{ben.krause@bristol.ac.uk}

\date{\today}

\maketitle

\begin{abstract}
    We provide a new proof of ``most" cases of the polynomial Wiener-Wintner theorem for $\sigma$-finite spaces, using hard-analytic methods. Specifically, we prove that whenever $(X,\mu,T)$ is a $\sigma$-finite measure-preserving system, and $f \in L^p(X), \ 1 \leq p < \infty$, there exists a co-null set $X_f \subset X$ so that for all $\omega \in X_f$
\[ \frac{1}{N} \sum_{n \leq N} e^{2 \pi i P(n)} f(T^n \omega) \]
converges for all polynomials $P$ which are either linear, or vanish to degree $2$ at the origin.
\end{abstract}

\section{Introduction}

The Wiener-Wintner ergodic theorem \cite{WW} is a classical generalization of Birkhoff's Theorem \cite{BI}; here and throughout, by a \emph{measure-preserving system}, $(X,\mu,T)$, we mean a probability space, $(X,\mu)$, equipped with a measure-preserving transformation
$T : X \to X$, so that
\[ \mu(T^{-1} E) = \mu(E) \text{ for all } E \subset X \text{ measurable};\]
a $\sigma$-finite measure-preserving system is as above, but the underlying space $(X,\mu)$ is $\sigma$-finite.

\begin{theorem}[Wiener-Wintner Ergodic Theorem]\label{t:WW}
Let $(X,\mu,T)$ be a measure-preserving system, and let $f \in L^1(X)$ be arbitrary. Then there exists a subset $X_f \subset X$ with $\mu(X_f) = 1$ so that for all $\omega \in X_f$
\[ \lim_N \frac{1}{N} \sum_{n \leq N} e^{2\pi i n \theta} f(T^n \omega)  \]
exists for {all} $\theta \in [0,1]$.
\end{theorem}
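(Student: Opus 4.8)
The plan is to replace the qualitative statement ``convergence for all $\theta$'' by a single quantitative inequality, uniform in $\theta$, from which the common null set comes out automatically. Write $A_N^\theta f(\omega) = \frac{1}{N}\sum_{n \leq N} e^{2\pi i n\theta} f(T^n\omega)$. Convergence in $N$ for every $\theta$ and a.e. $\omega$ would follow at once from the a.e. finiteness of the uniform $r$-variation $\sup_{\theta}\, V^r\big( (A_N^\theta f)_N\big)(\omega)$ for some $r < \infty$, since finite variation forces a Cauchy sequence. I would first prove this for a dense class (say bounded $f$ of bounded support), and then extend to all $f \in L^1$ by the Banach principle, which requires the uniform maximal bound $\big\| \sup_\theta \sup_N |A_N^\theta f|\big\|_{L^{1,\infty}(X)} \lesssim \|f\|_{L^1(X)}$; the latter is in turn reduced, by Calder\'on transference, to a modulation-uniform maximal inequality for the averaging operators $g \mapsto \frac1N\sum_{n\le N} e^{2\pi i n\theta} g(\cdot+n)$ on $\Z$.

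The analytic heart is an $L^2$ estimate in which the supremum over $\theta$ is tamed by orthogonality. The basic identity is
\[ \int_0^1 |A_N^\theta f(\omega)|^2 \, d\theta = \frac{1}{N^2}\sum_{n\le N} |f(T^n\omega)|^2, \]
so that $\int_X\int_0^1 |A_N^\theta f|^2\, d\theta\, d\mu = \tfrac1N \|f\|_{L^2}^2 \to 0$. To pass from this $L^2_\theta$ average to the pointwise supremum in $\theta$ I would use a Sobolev/Gagliardo--Nirenberg inequality on the circle,
\[ \sup_\theta |F(\theta)|^2 \lesssim \Big(\int_0^1 |F|^2\Big) + \Big(\int_0^1 |F|^2\Big)^{1/2}\Big(\int_0^1 |F'|^2\Big)^{1/2}, \]
applied to $F(\theta) = A_N^\theta f(\omega)$; differentiating in $\theta$ only inserts a factor $n \le N$ into each summand, so $\int_0^1 |F'|^2$ is comparable to $N^2$ times the same quadratic expression, and the net cost is a harmless power of $N$ that the square-function machinery absorbs. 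Upgrading this single-scale decay into a summable variation across $N$ is then carried out by a Rademacher--Menshov decomposition over dyadic scales $N \sim 2^j$, exploiting cancellation between adjacent scales.

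The main obstacle, and the whole reason the classical proof retreats to the spectral (Kronecker versus weakly-mixing) decomposition, is making the supremum over the uncountable parameter $\theta$ interact correctly with the variation in $N$. A crude union bound over a $2^{-j}$-net of frequencies at scale $2^j$ contributes $\sim 2^j$ terms, which cannot be summed against the square-function gain without genuine cancellation; the delicate step is therefore a joint oscillation estimate that is simultaneously uniform in $\theta$ and square-summable in the scale $j$, proved at $L^2$ via $TT^*$ and almost-orthogonality and then pushed to the weak-$(1,1)$ endpoint needed to reach $L^1$. It is precisely this uniform hard-analytic estimate---rather than the soft ergodic-theoretic splitting---that the paper develops, since it is the version that survives the passage to $\sigma$-finite spaces and to polynomial phases.
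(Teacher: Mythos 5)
Your reduction scheme (uniform $r$-variation on a dense class, then a density/Banach-principle argument via the maximal inequality and Calder\'on transference) matches the paper's skeleton, and the maximal bound you request is in fact trivial, since $\sup_{\theta,N}|A_N^\theta f(\omega)| \leq \sup_N \frac{1}{N}\sum_{n\leq N}|f(T^n\omega)|$ is dominated by the Birkhoff maximal function. The gap is in your ``analytic heart.'' The Sobolev step yields exactly zero gain: with $F(\theta) = A_N^\theta f(\omega)$ one has
\begin{align}
\int_0^1 |F|^2\,d\theta = \frac{1}{N^2}\sum_{n\leq N}|f(T^n\omega)|^2, \qquad \int_0^1 |F'|^2\,d\theta = \frac{4\pi^2}{N^2}\sum_{n\leq N}n^2|f(T^n\omega)|^2 \leq 4\pi^2 \sum_{n\leq N}|f(T^n\omega)|^2,
\end{align}
so the cross term $\big(\int|F|^2\big)^{1/2}\big(\int|F'|^2\big)^{1/2}$ is $\approx \frac{1}{N}\sum_{n\leq N}|f(T^n\omega)|^2$, and after integrating in $\omega$ you get only $\|\sup_\theta|A_N^\theta f|\|_{L^2(X)} \lesssim \|f\|_{L^2(X)}$ --- the factor $N$ from differentiating in $\theta$ precisely cancels the orthogonality gain $1/N$, leaving nothing for Rademacher--Menshov to sum. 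This is not a defect of the particular inequality you chose: no single-scale decay is possible, because if $f$ is an eigenfunction, $f\circ T = e^{2\pi i \alpha} f$, then $A_N^{-\alpha}f(\omega) = f(\omega)$ for every $N$, so $\sup_\theta |A_N^\theta f| \equiv |f|$ identically. The averages converge for all $\theta$, but generically to nonzero limits, so any strategy premised on smallness of $\sup_\theta$ of a single scale (or of consecutive-scale differences with a summable rate) collapses.

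What then remains is exactly your final paragraph's ``joint oscillation estimate, uniform in $\theta$ and square-summable in the scale, proved via $TT^*$ and almost-orthogonality'' --- but this is the crux, and it cannot be obtained that way. After transference to $\mathbb{Z}$, the operator $g \mapsto \sup_\theta \mathcal{V}^r\big(\sum_n \varphi_N(n) e^{2\pi i n\theta} g(x-n) : N\big)$ is invariant under modulations $g(n) \mapsto e^{2\pi i n\theta_0}g(n)$, so any argument organized around a fixed frequency decomposition with almost-orthogonality between the pieces is structurally incapable of proving it; this modulation invariance is precisely why the paper says the linear case ``necessitates an approach deriving from time frequency analysis.'' The estimate you need is (a lacunary form of) the variation-norm Carleson theorem, and the paper does not prove it either: it imports it as a black box from Oberlin--Seeger--Tao--Thiele--Wright \cite{O+} (Remark D.4, stated as Proposition \ref{p:VCar}) and spends Section \ref{s:lin} transferring it to smooth discrete averages via a convexity lemma, the comparison quantity $A[\varphi,\chi]$, and Magyar--Stein--Wainger transference. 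So your outline names the hard step but does not supply it, and the intermediate claims meant to render it routine are quantitatively false. If your goal were only the stated probability-space theorem, the soft route you explicitly rejected is the one that works elementarily: split off the Kronecker factor, where convergence for all $\theta$ is explicit for finite combinations of eigenfunctions, and on its orthocomplement prove $\sup_\theta |A_N^\theta f| \to 0$ a.e.\ via van der Corput's inequality and Birkhoff's theorem applied to $f \cdot \overline{f\circ T^h}$; the hard-analytic route is forced only in the $\sigma$-finite setting that is the paper's actual target, where the Kronecker-factor machinery is unavailable.
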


The significance of this theorem is that the limit holds for \emph{all} $\theta$, rather than just for almost every $\theta$; this weaker statement follows from applying Birkhoff's Ergodic Theorem in the product setting. And, whenever $(X,\mu)$ is countably generated, one can choose a universal full-measure subset, $X_0 \subset X$ so that the conclusion of Theorem \ref{t:WW} holds for all $f \in L^1(X)$ whenever $\omega \in X_0$; this follows from the maximal ergodic theorem and a standard density argument.

At this point the Wiener-Wintner Theorem is well understood, and is known to accommodate more general weighted averages including polynomials \cite{LES}, nilsequences \cite{HK}, Hardy field functions \cite{EK}, etc.; see \cite{ASSBook,AssSum} for a fuller discussion. Moreover, this line of inquiry ultimately admits the profound extension to the \emph{return times} setting \cite{B2}.

In this note we provide a hard-analytic proof of ``most" cases of the polynomial Wiener-Wintner Theorem which holds in the $\sigma$-finite setting; see \cite{ASSBook} for an alternative proof of the case with linear modulations.

\begin{theorem}\label{t:main}
Let $(X,\mu,T)$ be a $\sigma$-finite measure-preserving system, and let $f \in L^p(X), \ 1 \leq p < \infty$ be arbitrary. Then there exists a co-null subset $X_f \subset X$ so that for all $\omega \in X_f$
\[ \lim_N \frac{1}{N} \sum_{n \leq N} e^{2\pi i P(n)} f(T^n \omega)  \]
exists for {all} polynomials $P \in \mathbb{R}[\cdot]$ that are either linear, or vanish to degree two at the origin.
\end{theorem}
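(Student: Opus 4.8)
The plan is to concentrate all of the oscillatory difficulty into a single $L^2$ variation estimate, and to exploit the trivial but crucial observation that the modulating factors have modulus one. Write $A_N^P f(\omega) = \frac{1}{N}\sum_{n \leq N} e^{2\pi i P(n)} f(T^n\omega)$. Since $|A_N^P f(\omega)| \leq \frac{1}{N}\sum_{n\leq N} |f(T^n\omega)| =: A_N|f|(\omega)$ \emph{pointwise and uniformly in $P$ and $N$}, the uniform-in-$P$ maximal function $\sup_{P,N}|A_N^P f|$ is dominated by the ordinary ergodic maximal function $Mf = \sup_N A_N|f|$. By the Hopf maximal ergodic theorem, which is valid for $\sigma$-finite systems, $M$ is of weak type $(1,1)$ and bounded on $L^p$ for $1<p<\infty$. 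This furnishes the a priori control needed across the entire range $1 \leq p < \infty$ with no oscillatory input. Consequently, once a.e.\ convergence for all admissible $P$ is established for $f$ in the dense subclass $L^2(X)\cap L^p(X)$, the standard density argument upgrades it to all of $L^p$: decomposing $f = g + h$ with $g$ in the dense class and $\|h\|_p$ small, and bounding the oscillation $\sup_P \limsup_{M,N}|A_M^P h - A_N^P h| \leq 2Mh$ uniformly in $P$, one finds the oscillation of $f$ is $\leq 2Mh$ on the co-null set where $g$ converges, and letting $\|h\|_p \to 0$ kills it. Thus it suffices to treat $f \in L^2(X)$.

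For $f \in L^2(X)$ the target is, for each of the two families, the uniform variation estimate
\[ \Big\| \sup_P V^r_N\big(A_N^P f\big)\Big\|_{L^2(X)} \lesssim_r \|f\|_{L^2(X)}, \qquad r > 2, \]
where $V^r_N$ denotes the $r$-variation in the scale parameter $N$; finiteness of this variation simultaneously for all $P$ forces the limits to exist for every $P$ on a co-null set. By Calder\'on's transference principle it is enough to prove the corresponding inequality for the shift on $\ell^2(\Z)$, where Plancherel's theorem applies and each $A_N^P$ becomes a Fourier multiplier with symbol $\frac{1}{N}\sum_{n\leq N} e^{2\pi i(P(n) + n\xi)}$, a complete exponential sum with polynomial phase. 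The uniformity over the parameter is handled by a Sobolev embedding in the coefficient torus: parametrizing the family by $\theta \in \TT^k$ (with $\theta = \alpha$ in the linear case and $\theta = (a_2,\dots,a_d)$ at fixed degree $d$ in the curved case) and setting $G_\omega(\theta) = V^r_N(A_N^{P_\theta}f)(\omega)$, the embedding $W^{s,2}(\TT^k) \hookrightarrow L^\infty$ for $s > k/2$ reduces the pointwise supremum over $\theta$ to an $L^2_\theta$ average of $G_\omega$ together with its $\theta$-derivatives. Exchanging each $\partial_\theta$ with the variation seminorm — legitimate since $A_N^{P_\theta}f(\omega)$ is a finite trigonometric polynomial in $\theta$ for each $N$ — replaces $A_N^{P_\theta}$ by finitely many polynomially weighted modulated averages, so the whole estimate follows from ordinary (unsupremized) variation-norm bounds integrated over the parameter torus.

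The two families are then dispatched by different but parallel harmonic analysis. In the linear case the symbol is a translate $\xi \mapsto D_N(\alpha + \xi)$ of the Fej\'er-type kernel, and the required variation-norm bounds for $A_N^\alpha$ and its $\alpha$-derivatives reduce to the established discrete variation-norm technology for averaging multipliers (in the spirit of Bourgain and Jones--Seeger--Wright). In the curved case, at each fixed degree $d$, the core is a variation estimate for the polynomial averages that is uniform over the coefficient torus; here the circle method enters, with a minor/major-arc decomposition in which the \emph{absence of a linear term} guarantees genuine Weyl cancellation with no resonant frequency, and the arithmetic (major-arc) contributions are organized, uniformly in the coefficients, via Ionescu--Wainger multiplier theory. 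Since every curved $P$ has some finite degree, intersecting the resulting co-null sets over the countably many values of $d$ yields a single co-null set valid for all curved polynomials.

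I expect the curved-case variation estimate to be the main obstacle: controlling the $r$-variation of the polynomial averages \emph{uniformly across the coefficient torus}, which forces the minor-arc Weyl estimates and the Ionescu--Wainger major-arc analysis to be carried out with quantitative dependence on the coefficients strong enough to survive the Sobolev-in-$\theta$ step; a secondary technical point is justifying the interchange of the $\theta$-derivatives with the non-smooth variation seminorm. Conceptually, this approach also clarifies exactly why the general polynomial is excluded: a phase $\alpha n + (\text{higher order})$ couples a \emph{resonant} linear frequency $\alpha$ — the Carleson phenomenon, where the maximal operator is no longer dominated by $M$ and orthogonality degenerates — with genuine curvature, and no uniform variation estimate of the requisite Lie-type polynomial-Carleson strength is available in the discrete ergodic-averaging setting. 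That resonant coupling is precisely the boundary of what the hard-analytic method reaches, and is the reason the theorem covers ``most,'' rather than all, polynomial phases.
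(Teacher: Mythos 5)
Your global frame matches the paper's: reduce to a dense class via the maximal ergodic theorem (valid in the $\sigma$-finite setting) and the uniform-in-$P$ domination by $Mf$, encode ``convergence for all $P$ simultaneously'' in a single $L^2$ bound for $\sup_P$ of an $r$-variation over lacunary scales, and transfer to $\ell^2(\mathbb{Z})$ by Calder\'{o}n. The proposal breaks at the one step carrying all the difficulty: handling the supremum over the coefficient torus by Sobolev embedding $W^{s,2}(\mathbb{T}^k)\hookrightarrow L^\infty$. Each derivative $\partial_{\theta_j}$ applied to $A_N^{P_\theta}f=\frac1N\sum_{n\le N}e^{2\pi i P_\theta(n)}f(T^n\omega)$ brings down a factor $2\pi i\, n^j$ with $n\approx N$, so the ``polynomially weighted modulated averages'' you propose to estimate after the exchange have $\ell^2$ operator norms of size $\approx N^j$; the ``ordinary (unsupremized) variation-norm bounds'' for them simply do not exist with uniform constants. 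Integration over the torus cannot repair this: by Plancherel in $\theta$ (the phases $n\mapsto(n,n^2,\dots)$ are injective), $\|A_N^{P_\theta}f(x)\|_{L^2_\theta}\lesssim N^{-1/2}M_{\text{HL}}(|f|^2)(x)^{1/2}$, a gain of only $N^{-1/2}$, while the Sobolev step costs at least $N^{js}$ with $s>1/2$ per variable. In the linear case ($j=k=1$) the gain and loss exactly cancel and you recover nothing better than the trivial pointwise bound $\sup_\theta|A_N^\theta f|\lesssim M_{\text{HL}}(|f|^2)^{1/2}$, with no decay or summability across scales and hence no variation estimate; in the curved case ($j\ge 2$) the loss strictly exceeds the gain. (The interchange of $\partial_\theta$ with the non-smooth functional $\mathcal{V}^r$ is also unjustified as stated, though that is repairable by a fundamental-theorem-of-calculus/Minkowski argument; the quantitative loss above is not.)

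The failure in the linear case is structural, not merely quantitative, and this is the point your proposal misses: the operator $f\mapsto\sup_\theta \mathcal{V}^r(A_N^\theta f:N)$ is modulation invariant --- replacing $f$ by $f_{\theta_0}(n)=e^{2\pi i n\theta_0}f(n)$ leaves it unchanged --- so no argument that averages in $\theta$ can see the supremum; the required bound is, via transference, a variational form of Carleson's theorem. You correctly name the ``Carleson phenomenon'' as the obstruction to mixed phases $\alpha n+(\text{curved})$, but the purely linear family already \emph{is} that phenomenon, which is why the paper does not attack it with orthogonality at all: it imports the variation-norm Carleson theorem of Oberlin--Seeger--Tao--Thiele--Wright (their Remark D.4, proved by wave-packet analysis) and spends Section \ref{s:lin} on a transference scheme (convexity, a comparison quantity $A[\varphi,\chi]$, and Magyar--Stein--Wainger) to move it to smooth discrete averages. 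If your Sobolev argument closed, it would yield an elementary proof of variational Carleson. For the curved family your instincts --- circle method, genuine Weyl cancellation from the absence of a linear term, arithmetic organization of major arcs --- do match the paper's actual route (a KSW-style decomposition with Weyl-sum decay, multi-frequency sequence-space estimates, metric chaining, and a $TT^*$/Magyar--Stein--Wainger argument for the supremum over coefficients at a fixed scale), but in your write-up the uniformity over the coefficient torus is still delegated to the Sobolev step, so neither case closes as written.
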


Standard arguments allow one to replace the character $t \mapsto e^{2\pi i t}$ with any Riemann integrable function $\phi : \mathbb{T} \to \mathbb{C}$ without adjusting $X_f$. And, we may remove the assumption that $X$ is $\sigma$-finite by restricting to the ($\sigma$-finite) $\sigma$-algebra generated by $\{ f(T^n \cdot) : n \in \mathbb{Z} \}$ for each individual $f \in L^p(X), \ 1 \leq p < \infty$.

\subsection{Proof Overview}
In proving Theorem \ref{t:main}, by the Maximal Ergodic Theorem and a standard density argument, we may assume restrict to simple functions, and then by linearity to integrable indicators; with this in mind, we may restrict as well to lacunary times $N \in \{ \lfloor \lambda^k \rfloor : k \in \mathbb{N} \}$ for $1 < \lambda \leq 2$. And, there is no harm in replacing the rough cut-off $\frac{1}{N} \mathbf{1}_{[1,N]}$ with a smooth bump function
\begin{align}
    \varphi_N(n) := \frac{1}{N} \varphi(\frac{n}{N}) 
\end{align}
where 
\[ \| \varphi - \mathbf{1}_{[0,1]} \|_{L^1(\mathbb{R})} \leq \epsilon_0 \]
and
\begin{align}\label{e:eps} |\partial^\alpha \varphi| \lesssim_\alpha \epsilon_0^{-\alpha} \cdot \mathbf{1}_{[0,1]} \end{align}
for sufficiently many $\alpha$; see Subsection \S \ref{sss:O} below to recall the $\lesssim_\alpha$ notation. We will regard $\epsilon_0$ as fixed throughout the remainder of the paper. Finally, we will restrict as we may to polynomials whose degree is bounded above by $d$.

Thus, if we define
\begin{align}
\Phi_M^P f(\omega) := \Phi_M^{P;T} f(\omega) := \sum_m \varphi_M(m) e^{2 \pi i P(m)}  f(T^m \omega)
\end{align}
and for each $\omega \in X$, define the (truncated) \emph{jump-counting} function
\begin{align}\label{e:jump0}
    &N_{\tau,H}^P f(\omega) := N_{\tau,H}^{P;T} f(\omega) := \\
    &\qquad := \sup\Big\{ K : \text{ there exists } M_0(\omega) < M_1(\omega)< \dots < M_K(\omega) \leq H :\\ & \qquad \qquad\qquad \qquad  |\Phi_{M_{k-1}(\omega)}^P f(\omega) - \Phi_{M_k(\omega)}^P f(\omega)| \gg \tau \Big\} 
\end{align}
where all times are of the form $\lfloor (1+\tau)^{\mathbb{N}} \rfloor$, the following elementary lemma will guide our analysis; see Subsection \S \ref{sss:O} below for a review of little-Oh notation.

\begin{lemma}\label{l:trans1}
    Suppose that, for each integrable indicator $f$,
    \begin{align}
        \mu(\{X : \sup_{P \in \mathcal{P}} N_{\tau,H}^P f(\omega) \geq L \}) = o_{L \to \infty;\tau,\| f \|_{L^2(X)}}(1),
    \end{align}
    independent of $H$; then for any $f \in L^p(X), \ 1 \leq p < \infty$, there exists a co-null set $X_f \subset X$ so that for all $\omega \in X_f$
    \begin{align}
        \lim_N \sum_{n} \varphi_N(n) e^{2 \pi i P(n)} f(T^n \omega)
    \end{align}
exists for all $P \in \mathcal{P}$.
\end{lemma}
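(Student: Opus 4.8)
The plan is to prove the lemma in two stages. First I would establish almost-everywhere convergence for a single integrable indicator $f$ directly from the jump hypothesis, and then bootstrap to arbitrary $f\in L^p$ by a density argument driven by the ergodic maximal inequality. The observation that makes the second stage uniform in $P$ is that $|e^{2\pi iP(m)}|=1$, so that the modulated averages are dominated, pointwise and uniformly in $P$, by an unmodulated maximal function: setting $M^*f(\omega):=\sup_N \frac1N\sum_{n\le N}|f(T^n\omega)|$, the size bound $\varphi\lesssim\mathbf{1}_{[0,1]}$ from \eqref{e:eps} gives $\sup_{M,P}|\Phi_M^P f(\omega)|\lesssim M^*f(\omega)$, and Hopf's maximal ergodic theorem supplies the weak-$(1,1)$ (and, for $p>1$, strong $L^p$) bounds for $M^*$ in the $\sigma$-finite setting.

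For the first stage, fix an integrable indicator $f$ and a scale $\tau>0$. Since $N_{\tau,H}^P f$ is nondecreasing in $H$, the sets $\{\sup_P N_{\tau,H}^P f\ge L\}$ increase with $H$ and exhaust $\{\sup_P N_\tau^P f\ge L\}$, where $N_\tau^P f:=\sup_H N_{\tau,H}^P f$ counts $\tau$-jumps over the entire $(1+\tau)$-lacunary grid; continuity of $\mu$ from below, together with the $H$-uniformity of the hypothesis, then yields $\mu(\sup_P N_\tau^P f\ge L)=o_{L\to\infty}(1)$, and hence $\sup_P N_\tau^P f<\infty$ almost everywhere. Running this along a sequence $\tau_j\downarrow 0$ and intersecting the resulting co-null sets (and the co-null set $\{M^*f<\infty\}$) produces one co-null set on which $N_{\tau_j}^P f(\omega)<\infty$ for every $j$ and every $P\in\mathcal{P}$ simultaneously.

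It then remains to convert this finiteness into convergence. Fixing such an $\omega$ and a polynomial $P$ and writing $a_M:=\Phi_M^P f(\omega)$, the elementary fact that a sequence with finitely many $\tau$-jumps has oscillation $O(\tau)$ — applied to the real and imaginary parts of $(a_M)$ along the grid — shows that $a_M$ oscillates by $\lesssim \tau_j$ over the tail of the $(1+\tau_j)$-grid. To fill the lacunary gaps I would use the smoothness of $\varphi$: the mean value theorem in the scaling variable together with the derivative bound \eqref{e:eps} gives $|\varphi_M(m)-\varphi_{M'}(m)|\lesssim_{\epsilon_0}\tau_j M^{-1}\mathbf{1}_{[0,2M]}(m)$ whenever $M\le M'\le (1+\tau_j)M$, so the within-gap oscillation is $\lesssim_{\epsilon_0}\tau_j\,M^*f(\omega)$. (This is exactly where the replacement of the rough cutoff by a smooth bump is used: for $\frac1N\mathbf{1}_{[0,N]}$ the edge contribution would be only $O(M^*f)$.) Combining the two bounds controls the oscillation of $M\mapsto a_M$ over all large $M$ by $\lesssim_{\epsilon_0}\tau_j\bigl(1+M^*f(\omega)\bigr)$; letting $j\to\infty$ forces $(a_M)$ to be Cauchy, so $\lim_N\Phi_N^P f(\omega)$ exists for every $P\in\mathcal{P}$, proving the lemma for integrable indicators.

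Finally, to reach general $f\in L^p$, let $\mathcal{C}\subset L^p$ be the set of functions for which the conclusion holds; it is a linear subspace. It is closed: if $f_n\to f$ in $L^p$ with $f_n\in\mathcal{C}$, then the oscillation $\Omega g(\omega):=\sup_P\limsup_{M,M'}|\Phi_M^P g(\omega)-\Phi_{M'}^P g(\omega)|$ is subadditive and vanishes almost everywhere for each $f_n$, so $\Omega f\le\Omega(f-f_n)\le 2\sup_{M,P}|\Phi_M^P(f-f_n)|\lesssim M^*(f-f_n)$, whence $\mu(\Omega f>\lambda)\lesssim \mu(M^*(f-f_n)\gtrsim\lambda)\to 0$ as $n\to\infty$; as $\Omega f$ is independent of $n$, this gives $\Omega f=0$ almost everywhere. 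Since the integrable indicators lie in $\mathcal{C}$ and their span is dense in $L^p$, we conclude $\mathcal{C}=L^p$. The step I expect to demand the most care is the conversion of jump-finiteness into genuine convergence: the hypothesis is only qualitative, so rather than a quantitative estimate one must exploit monotonicity in $H$ to pass to the full grid, and then glue the grid oscillation to the smoothness-based within-gap control so as to recover convergence along the full sequence of cutoffs $N$.
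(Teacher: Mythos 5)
Your proposal is correct, and every essential ingredient matches the paper's proof; the main difference is organizational. The paper argues by contradiction: from a hypothetical positive-measure set of divergence points it extracts, for each $\omega$ in that set, an infinite chain of consecutive $\tau$-jumps along the $(1+\tau)$-grid, then uniformizes to a subset $E'$ of measure $\gg \tau$ and a \emph{single} finite height $H$ below which $L$ jumps occur, contradicting the hypothesis for $L$ large. You run the contrapositive directly: monotonicity in $H$ plus the $H$-uniformity of the hypothesis gives a.e.\ finiteness of the untruncated jump count for each $\tau_j$, which you then convert into the Cauchy property; the elementary ``pairs of large oscillations versus chains of consecutive jumps'' fact is used by both arguments, in mirror-image directions. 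What your route buys is that two steps the paper merely asserts get proved: the reduction from all $N$ to the lacunary grid (your mean-value-theorem gap-filling bound, which the paper compresses to ``no loss of generality''), and the $L^p$ density argument via the maximal ergodic theorem (the paper's ``as above''). The one point worth importing from the paper is measurability: $\sup_{P\in\mathcal{P}}$ is an uncountable supremum, and the paper restricts to rational coefficients using continuity in $P$ (each $\Phi_M^P f(\omega)$ is a finite sum since $\varphi$ is compactly supported). Your stage-one sets inherit measurability from the hypothesis itself, but your oscillation operator $\Omega$ needs this remark --- or can be bypassed by noting that on the intersection of the co-null sets for the approximants $f_n$, the oscillation at every fixed $P$ is bounded by a constant times $\inf_n M^*(f-f_n)$, which is measurable and vanishes a.e. A minor quibble: your parenthetical that smoothness of $\varphi$ is what makes gap-filling work is overstated in stage one, since for indicators (bounded functions) the rough cutoff also has within-gap oscillation $O(\tau)$; the smooth bump is really needed for the harmonic-analytic estimates later in the paper.
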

\begin{proof}
As above, it suffices to prove convergence only for integrable indicators; let $f$ be an arbitrary such indicator.

Since 
\[ P \mapsto N_{\tau,H}^P f(\omega) \]
is continuous for each $\omega, H,\tau$ and bounded $f$, it suffices to restrict to polynomials with rational coefficients, so 
\[ \{X : \sup_{P \in \mathcal{P}} N_{\tau,H}^P f(\omega) \geq L \} \]
is measurable for each $\tau, H$, and indicator function $f$. We proceed by contradiction. So, suppose that there exists a nontrivial set $E \subset X$ with $\mu(E) \gg \tau$ so that for all $\omega \in E$ there exists $P_\omega \in \mathcal{P}$ so that
\[ \limsup_{N,M \to \infty} |\Phi_M^{P_\omega} f(\omega) - \Phi_N^{P_\omega} f(\omega)| \gg \tau; \]
there is no loss of generality in assuming that $M,N$ lie in a $(1+\tau)$-lacunary sequence.

So, for all $\omega \in E$, there exists an infinite sequence $M_0(\omega) < M_1(\omega) < \dots $ so that
\begin{align}
     |\Phi_{M_{k-1}(\omega)}^{P_\omega} f(\omega) - \Phi_{M_k(\omega)}^{P_\omega} f(\omega)| \gg \tau, \; \; \; k \geq 1;
\end{align}
possibly after replacing $E$ with a subset, $E' \subset E$ of measure $\gg \tau$, we may assume that for all $\omega \in E'$, there exists a subsequence
\[ M_0(\omega) < M_1(\omega) < \dots < M_K(\omega) \leq H \]
with $K \geq L \gg_\tau 1$ so that the above oscillatory estimate holds; while $H$ may depend on $\tau, L$, our estimates will be independent of this precise choice of $H$. In particular, for each such $\omega$,
\[ \sup_{P \in \mathcal{P}} N_{\tau,H}^P f(\omega) \geq L,\]
so that
\[ \tau \ll \mu(E') \leq \mu(\{X : \sup_{P \in \mathcal{P}} N_{\tau,H}^P f(\omega) \geq L \}) = o_{L \to \infty;\tau,\| f \|_{L^2(X)}}(1),\]
for the desired contradiction.
\end{proof}

We will apply Lemma \ref{l:trans1} by dominating the pertaining {jump-counting} function \eqref{e:jump0} by its \emph{variational} counterpart,
\begin{align}\label{e:toyVr}
    V^r_{\mathcal{P}}f := \sup_{P \in \mathcal{P}} \sup \big( \sum_i |\Phi_{M_k}^P f - \Phi_{M_{k-1}}^P f|^r \big)^{1/r}, \; \; \; 2< r < \infty
\end{align}
where the inner supremum is over finite increasing subsequences, as we may bound
\begin{align}
    \sup_{P \in \mathcal{P}} \tau N_{\tau,H}^P f(\omega)^{1/r} \leq V^r_{\mathcal{P}}f(\omega)
\end{align}
for each $\tau,H$. In particular, we will be interested in proving the norm estimates
\begin{align}
    \| V^r_{\mathcal{P}} f \|_{L^2(X)} \lesssim_{r,\epsilon_0,\lambda,d} \|f\|_{L^2(X)} 
\end{align}
whenever the sequence of times in the definition of $V^r_{\mathcal{P}}$ live inside a lacunary sequency, $\{ M_k \} \subset \{ \lfloor \lambda^n \rfloor \}, \ 1 < \lambda \leq 2$;
by Calder\'{o}n's transference principle, \cite{C1}, we may conduct our analysis on the integer lattice, and focus on estimating
\begin{align}\label{e:toyVr}
    \mathcal{V}^r_{\mathcal{P}}f := \sup_{P \in \mathcal{P}} \sup \big( \sum_i |A_{M_k}^P f - A_{M_{k-1}}^P f|^r \big)^{1/r}, \; \; \; 2< r < \infty
\end{align}
on $\ell^2(\mathbb{Z})$, where
\begin{align}
    A^P_M f(x) := \sum_n \varphi_M(n) e^{2 \pi i P(n)} f(x-n) : \mathbb{Z} \to \mathbb{C}
\end{align}
are polynomially modulated discrete convolution operators.

At this point, the analysis splits into two cases: in the first case, when $\mathcal{P}$ consists of linear polynomials, \eqref{e:toyVr} encodes a modulation invariance, namely
\begin{align}
    \mathcal{V}^r_{\mathcal{P}} f \equiv \mathcal{V}^r_{\mathcal{P}} f_\theta, \; \; \; f_\theta(n) := e^{2 \pi i n \theta} f(n)
\end{align}
which necessitates an approach deriving from time frequency analysis. Fortunately, the relevant analysis was developed in deep work of Oberlin-Seeger-Tao-Thiele-Wright \cite{O+}, in the context of quantitative convergence of Fourier series. We make no comments on their work, which goes by way of wave packet analysis, but simply import and appropriately transfer the estimate
\begin{align}\label{e:O+Vr}
   &\| \sup_\theta \sup \big( \sum_k |\int_{-r_k}^{r_k} f(x-t) e^{2 \pi i \theta t} \frac{dt}{2r_k} - \int_{-r_{k-1}}^{r_{k-1}} f(x-t) e^{2 \pi i \theta t} \frac{dt}{2r_{k-1}}|^r \big)^{1/r} \|_{L^2(\mathbb{R})} \\
   &\lesssim_r  \|f \|_{L^2(\mathbb{R})}, \; \; \; r > 2,
\end{align}
see \cite[Remark D.4]{O+}; the inner supremum is over finite increasing subsequences.

The second case we address is when $\mathcal{P}$ is given by
\begin{align}\label{e:Pd}  \mathcal{P}_d := \{ P \in \mathbb{R}[\cdot] : P(0) = P'(0) =0, \ \text{deg}(P) \leq d \}, \end{align}
polynomials of bounded degree which vanish to degree two at the origin. In this context, no modulation invariance presents, and orthogonality methods become more relevant; on the other hand, the lack of linearity in the class of $\mathcal{P}_d$ eliminates the connection to the real-variable problem. We build off prior work \cite{KSW} to prove
\begin{align}
    \| \mathcal{V}^r_{\mathcal{P}}f \|_{\ell^2(\mathbb{Z})} \lesssim_{\epsilon_0,\lambda,d}  (\frac{r}{r-2})^2 \| f\|_{\ell^2(\mathbb{Z})},
\end{align}
which in fact implies 
\begin{align}
    \| \mathcal{V}^r_{\mathcal{P}}f \|_{\ell^p(\mathbb{Z})} \lesssim_{\epsilon_0,\lambda,d} (\frac{r}{r-2})^2 \| f\|_{\ell^p(\mathbb{Z})}, \; \; \; r > \max\{p,p'\},
\end{align}
by interpolating against a special case of \cite{KSW}. While the arguments of \cite{KSW} lived at the interface of analytic number theory and harmonic analysis, deriving from Bourgain's celebrated work on the polynomial ergodic theorems \cite{B0,B1,B2}, our work here is purely analytic, as the number theoretic input of \cite{KSW} can be imported directly; our main ingredient is a ``variable coefficient" multi-frequency variation estimate, see \cite{GZK} for similar work.

Finally, we remark that addressing the case of general polynomial modulations using these methods is out of range of current techniques, as the polynomial modulation invariance whenever $\mathcal{P}$ consist of all real variable polynomials of degree $\leq d$,
\begin{align}
    \mathcal{V}^r_{\mathcal{P}} f \equiv \mathcal{V}^r_{\mathcal{P}} f_P, \; \; \; f_P(n) := e^{2 \pi i P(n)} f(n), \; \; \; \text{deg}(P) \leq d,
\end{align}
introduces a degree of complexity similar to that found in addressing the pointwise convergence of the multiple ergodic averages
\[ \frac{1}{N} \sum_{n \leq N} T^n f_1 \cdot \dots \cdot T^{n (d+1)} f_{d+1}, \; \; \; f_i \in L^\infty(X);\]
we hope to address this issue in future work. 
\medskip

The structure of the paper is as follows:

In \S \ref{s:lin}, we quickly address the case of linear polynomials;

In \S \ref{s:poly0}, we turn to the case of oscillatory polynomials, and complete the proof of Theorem \ref{t:main} by developing the necessary variational theory.

\subsection{Notation}\label{ss:not}
We use
\[ e(t) := e^{2 \pi i t} \]
throughout to denote the complex exponential, and let
\[ M_{\text{HL}} f(x) := \sup_{N \geq 0} \frac{1}{2N + 1} \sum_{|n| \leq N} |f(x-n)| \]
denote the discrete Hardy-Littelwood maximal function.

For a sequence of scalars, $\{ a_N \}$, we define the $r$-variation
\begin{align}
    \mathcal{V}^r(a_N:N) := \sup \big( \sum_k |a_{N_k} - a_{N_{k-1}}|^r\big)^{1/r},
\end{align}
where the supremum is over all finite increasing subsequences. We will only be interested below in the case where $2 < r < \infty$.

For functions $\{ f_N \} : X \to \mathbb{C}$, we define
\begin{align}
    \mathcal{V}^r(f_N:N)(x) := \mathcal{V}^r(f_N(x) :N)
\end{align}
pointwise.


\subsubsection{Asymptotic Notation}\label{sss:O}
We will make use of the modified Vinogradov notation. We use $X \lesssim Y$ or $Y \gtrsim X$ to denote
the estimate $X \leq CY$ for an absolute constant $C$ and $X, Y \geq 0.$  If we need $C$ to depend on a
parameter, we shall indicate this by subscripts, thus for instance $X \lesssim_p Y$ denotes the estimate $X \leq C_p Y$ for some $C_p$ depending on $p$. We use $X \approx Y$ as shorthand for $Y \lesssim X \lesssim Y$. We use the notation $X \ll Y$ or $Y \gg X$ to denote that the implicit constant in the $\lesssim$ notation is extremely large, and analogously $X \ll_p Y$ and $Y \gg_p X$.

We also make use of big-Oh and little-Oh 
notation: we let $O(Y)$  denote a quantity that is $\lesssim Y$ , and similarly
$O_p(Y )$ will denote a quantity that is $\lesssim_p Y$; we let $o_{t \to a}(Y)$
denote a quantity whose quotient with $Y$ tends to zero as $t \to a$ (possibly $\infty$), and
$o_{t \to a;p}(Y)$
denote a quantity whose quotient with $Y$ tends to zero as $t \to a$ at a rate depending on $p$.

\section{Linear Modulations}\label{s:lin}
We organize our analysis in this section by introducing the following definition:

\begin{definition}
    A function $\chi : \mathbb{R} \to \mathbb{C}$ is said to satisfy $r$-Variational Carleson with constant $C$ if the following inequality holds, independent over all choices of (finitely many) measurable functions $\{ R_i \}, \theta$:
    \begin{align}
&\| \Big( \sum_i |\int \big( \widehat{\chi}(R_i(x)(\beta - \theta(x))) - \widehat{\chi}(R_{i+1}(x)(\beta - \theta(x))) \big) \hat{f}(\beta) e(\beta x)|^r \Big)^{1/r} \|_{L^2(\mathbb{R})} \\
& \qquad \leq C \frac{r}{r-2} \|f\|_{L^2(\mathbb{R})}.
    \end{align}
\end{definition}

With this in mind, Remark D.4 of \cite{O+} can be stated as follows.
\begin{proposition}[Variational Carleson, Averaging Formulation]\label{p:VCar}
$\mathbf{1}_{[-1/2,1/2]}$ satisfies $r$-Variational Carleson with constant $O(1)$.
\end{proposition}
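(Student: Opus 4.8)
The plan is to recognize Proposition \ref{p:VCar} as a Fourier-analytic reformulation of the averaging estimate \eqref{e:O+Vr} imported from \cite[Remark D.4]{O+}, so that essentially no new analysis is required beyond a symbol computation and a linearization/domination step. First I would identify the multiplier of the modulated averaging operator. With $\chi = \mathbf{1}_{[-1/2,1/2]}$ one has $\widehat{\chi}(\xi) = \frac{\sin \pi \xi}{\pi \xi}$, which is real and even. Setting $A_r^\theta f(x) := \int_{-r}^{r} f(x-t)\, e(\theta t)\, \frac{dt}{2r}$, a direct calculation on the Fourier side — expand $f(x-t) = \int \hat f(\beta) e(\beta(x-t))\, d\beta$ and integrate in $t$ first, obtaining the inner integral $\frac{1}{2r}\int_{-r}^r e((\theta-\beta)t)\, dt = \frac{\sin(2\pi r(\theta-\beta))}{2\pi r(\theta - \beta)}$ — gives
\[ A_r^\theta f(x) = \int \widehat{\chi}\big(2r(\beta - \theta)\big)\, \hat f(\beta)\, e(\beta x)\, d\beta, \]
where the evenness of $\widehat{\chi}$ is used to pass from $\theta - \beta$ to $\beta - \theta$.

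Next I would match the two formulations. Writing $R_i = 2 r_i$ — harmless, since both range over arbitrary positive measurable functions — the difference appearing in the definition of $r$-Variational Carleson becomes exactly
\[ \int \big( \widehat{\chi}(R_i(x)(\beta - \theta(x))) - \widehat{\chi}(R_{i+1}(x)(\beta - \theta(x))) \big)\hat f(\beta)\, e(\beta x)\, d\beta = A_{r_i}^{\theta(x)} f(x) - A_{r_{i+1}}^{\theta(x)} f(x). \]
Thus, for any measurable selector $\theta(x)$ and any pointwise-increasing family $R_1(x) < R_2(x) < \cdots$, the integrand in the Carleson inequality is, pointwise in $x$, at most
\[ \sup_\theta \sup \Big( \sum_k |A_{r_k}^\theta f(x) - A_{r_{k-1}}^\theta f(x)|^r \Big)^{1/r}, \]
the suprema being over all $\theta$ and all finite increasing sequences of scales. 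Taking $L^2(\mathbb{R})$ norms and invoking \eqref{e:O+Vr} — in the quantitative form of \cite[Remark D.4]{O+}, whose implied constant is $O(\tfrac{r}{r-2})$ — then yields the claimed bound with constant $O(1)$.

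The only genuinely delicate point is the linearization: one must verify that the pointwise supremum in \eqref{e:O+Vr} dominates the expression produced by fixed measurable choices $\theta(x), R_i(x)$. This is immediate once measurability of the selectors is granted, since any particular admissible choice is pointwise bounded by the supremum over all choices — so the direction of the inequality is precisely the one we need, and no measurable-selection argument in the reverse direction is required. I would, however, be careful to record that the $r$-variation is organized over \emph{increasing} scales, so that the $R_i$ inherit the monotone ordering of the $r_i$ and the zigzag pathologies of summing differences along an unordered sequence of scales never arise; the factor-of-two rescaling $R_i = 2r_i$ and the evenness of $\widehat{\chi}$ absorb the remaining bookkeeping.
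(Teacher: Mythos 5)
Your proposal is correct and takes essentially the same route as the paper: the paper gives no independent proof of Proposition \ref{p:VCar}, treating it as a direct restatement of \cite[Remark D.4]{O+} (the estimate recorded in \eqref{e:O+Vr}), and your multiplier identification $A_r^\theta f(x) = \int \widehat{\chi}(2r(\beta - \theta))\hat f(\beta) e(\beta x)\, d\beta$, the rescaling $R_i = 2r_i$, and the pointwise domination of any measurable linearization by the double supremum constitute exactly the routine matching that this restatement rests on. Your closing observation, that the $R_i$ must carry the increasing ordering of the scales so that no zigzag pathologies enter, correctly identifies the intended reading of the definition under which the identification with \cite[Remark D.4]{O+} is immediate.
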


Our task, therefore, is to prove the following proposition.

\begin{proposition}\label{p:vCar0}
    Suppose that $\varphi$ is smooth, and satisfies
    \begin{align}
        \| \partial^\alpha \varphi \|_\infty \leq \epsilon^{-\alpha} \cdot \mathbf{1}_{[0,1]}
    \end{align}
    for sufficiently many $\alpha$, and that all times $M \in \lfloor \lambda^{\mathbb{N}} \rfloor \subset \mathbb{N}, \ \lambda > 1$. Then
    \begin{align}
      \|  \sup_\theta \mathcal{V}^r( \sum_m \varphi_M(m) g(a+m) e(m \theta) : M) \|_{\ell^2(\mathbb{Z})} \lesssim \epsilon^{-O(1)} \frac{r}{r-2} \frac{\lambda}{\lambda -1} \| f\|_{\ell^2(\mathbb{Z})}. 
    \end{align}
\end{proposition}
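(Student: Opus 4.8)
The goal is to transfer Proposition~\ref{p:VCar} (Variational Carleson for the sharp cutoff $\mathbf{1}_{[-1/2,1/2]}$) to the smooth discrete averages $\sum_m \varphi_M(m) g(a+m) e(m\theta)$, picking up an acceptable $\epsilon^{-O(1)} \frac{\lambda}{\lambda-1}$ loss. Here is how I would proceed.

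\medskip

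The plan is to first reduce the discrete estimate to the continuous one via Calder\'{o}n transference, so that it suffices to control the continuous variational operator
\[
\sup_\theta \mathcal{V}^r\Big( \int \varphi_M(t) g(x+t) e(t\theta)\, dt : M \Big)
\]
on $L^2(\mathbb{R})$; the modulation invariance means the frequency variable $\theta$ can be absorbed into the running supremum, which is exactly the structure of the Variational Carleson inequality. On the Fourier side, the multiplier attached to $\varphi_M$ at frequency $\beta$ is $\widehat{\varphi}(M^{-1}(\beta-\theta))$ after the modulation $e(t\theta)$ recenters the frequency at $\theta$. The sharp-cutoff multiplier in Proposition~\ref{p:VCar} is $\widehat{\mathbf{1}_{[-1/2,1/2]}}(R(\beta-\theta)) = \frac{\sin(\pi R(\beta-\theta))}{\pi R(\beta - \theta)}$, so the task is to express $\widehat{\varphi}$ as a superposition of dilated sharp cutoffs and exploit the linearity of the inequality in the multiplier. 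Concretely, I would write
\[
\varphi(s) = \int_0^\infty \mathbf{1}_{[-u/2,u/2]}(s)\, d\nu(u)
\]
for a signed measure $\nu$ built from $\varphi$ — since $\varphi$ is a nice bump supported in $[0,1]$ (after a harmless translation to center it, or by working with the layer-cake/coarea representation of $\varphi$ as an average of indicators of its superlevel sets), one can decompose it into a continuous average of symmetric boxes. Then the $\varphi_M$-average is a superposition over $u$ of the sharp box averages at scale $Mu$, and the running scale $R_i = M_i u$ stays a legitimate (measurable) choice of radii for each fixed $u$.

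\medskip

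With such a representation in hand, the core step is to apply Minkowski's integral inequality in $u$ to pull the $L^2(\mathbb{R})$ norm and the $r$-variation inside the $d\nu(u)$ integral, bounding the whole operator by
\[
\int_0^\infty \Big\| \sup_\theta \mathcal{V}^r\big( \text{box average at scale } M_i u : M \big) \Big\|_{L^2} \, d|\nu|(u) \lesssim \frac{r}{r-2}\, \|f\|_{L^2} \cdot |\nu|(0,\infty),
\]
where each integrand is controlled by Proposition~\ref{p:VCar} with constant $O(1)$, uniformly in $u$ (dilating the radii does not change the constant, by scaling). The total mass $|\nu|(0,\infty)$ is where the $\epsilon^{-O(1)}$ dependence enters: the finer $\varphi$ is (i.e. the smaller $\epsilon_0$), the larger the total variation of the decomposing measure, and the derivative bounds \eqref{e:eps} quantify this as a fixed negative power of $\epsilon$. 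The $\frac{\lambda}{\lambda-1}$ factor comes from the passage between the lacunary discrete scales $\lfloor \lambda^{\mathbb{N}} \rfloor$ and the continuous scales — comparing $A_M^P$ to its continuous analog produces an error governed by the geometric gap between consecutive scales, summable against the variation at the cost of $\frac{\lambda}{\lambda-1}$.

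\medskip

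The main obstacle I anticipate is the decomposition of $\varphi$ into symmetric boxes while keeping the frequency centers aligned: the Variational Carleson inequality is stated for \emph{symmetric} intervals $[-R_i, R_i]$ centered at a single modulation frequency $\theta(x)$, whereas $\varphi$ is supported in $[0,1]$, not symmetric about the origin. Recentering $\varphi$ to $[-1/2,1/2]$ introduces a modulation $e(s \cdot \text{const})$ that must be reabsorbed into the sup over $\theta$ — this is harmless precisely because the inequality is a supremum over all $\theta$, so a fixed frequency shift can be swallowed. More delicately, writing a genuinely smooth (rather than merely monotone) $\varphi$ as a superposition of symmetric indicators requires handling the two ``sides'' of the bump; I would split $\varphi$ into a symmetric part and an antisymmetric part, or more cleanly integrate by parts against the sharp-cutoff kernel using that $\widehat{\varphi}(\xi)$ and $\frac{\sin(\pi \xi)}{\pi\xi}$ are related by a convolution/multiplication whose weight has controlled mass. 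Verifying that this weight has $L^1$ mass $\lesssim \epsilon^{-O(1)}$, and that each elementary piece is a legitimate input to Proposition~\ref{p:VCar} with radii that remain lacunary and measurable, is the technical heart of the argument; everything else is transference and Minkowski.
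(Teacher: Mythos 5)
Your opening move --- representing a kernel as a superposition of dilated symmetric boxes via the fundamental theorem of calculus, applying Proposition \ref{p:VCar} to each box uniformly in the dilation parameter, and integrating out with Minkowski at a cost of $\|s\varphi'(s)\|_{L^1} \lesssim \epsilon^{-O(1)}$ --- is exactly the paper's first lemma (its ``convexity result''), so the core of your plan is sound for \emph{even} kernels. But there is a genuine gap at precisely the point you flag as the ``main obstacle,'' and neither of your proposed fixes repairs it. The recentering idea fails outright: moving $\varphi$ from $[0,1]$ to $[-1/2,1/2]$ is a translation in \emph{physical} space, so after rescaling by $M$ the operator at scale $M$ becomes (up to the phase $e(M\theta/2)$) the symmetric average evaluated at the shifted point $a + M/2$; equivalently, the multiplier picks up the factor $e(-M(\beta-\theta)/2)$, a phase linear in $\beta$ whose slope depends on the scale. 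This is a \emph{scale-dependent} spatial shift, not a fixed frequency shift, so it cannot be absorbed into the supremum over $\theta$, and the $r$-variation is not invariant under it: the phases and shift points differ between $M_k$ and $M_{k-1}$ and do not cancel in the differences. Your fallback --- splitting $\varphi$ into even and odd parts --- is indeed what the paper does, but it leaves the odd part untouched by your machinery: every superposition of symmetric indicators $\mathbf{1}_{[-u/2,u/2]}$ is even, so no signed measure $\nu$ can represent the odd part, and no amount of integration by parts against the sharp cutoff manufactures one.

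The paper closes this gap with an additional, genuinely non-elementary ingredient that your proposal lacks. The odd/error part $\varphi - \varphi^{\text{E}}$ is a smooth, compactly supported, mean-zero kernel, i.e.\ a Calder\'{o}n--Zygmund kernel; the paper introduces the comparison quantity $A[\varphi,\chi]$ and proves that the sup-over-$\theta$ \emph{lacunary square function} attached to such a difference kernel is bounded on $L^2$, by randomization together with Carleson's theorem for Calder\'{o}n--Zygmund kernels (implicit in \cite{Fef}; see \cite{B+}, \cite{Lie}). Since $r > 2$, this square function dominates the variation of the error, which is how the asymmetry is absorbed --- at the price of importing modulated Calder\'{o}n--Zygmund theory, not just Proposition \ref{p:VCar}. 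A secondary but real issue is your discrete-to-continuous reduction: what you call ``Calder\'{o}n transference'' transfers from the integer lattice to abstract measure-preserving systems, not from $\ell^2(\mathbb{Z})$ to $L^2(\mathbb{R})$. The discrete multiplier is a \emph{periodization} of the continuous one, and to kill the aliasing terms the paper must do real work: a Littlewood--Paley decomposition of $\varphi$ in frequency (so each piece is band-limited, with the high-frequency pieces contributing geometrically small constants), a Fej\'{e}r-kernel band-limiting of $g$, restriction of $\theta$ to intervals of length $1/5$, and Magyar--Stein--Wainger transference \cite{MSW}, after which Poisson summation is exact. These two missing ingredients --- Carleson for CZ kernels to handle the odd part, and band-limited transference for the discrete passage --- are the technical heart of the paper's proof, and without them (or substitutes for them) your argument does not close.
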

The proof of Proposition \ref{p:vCar0} will derive from Proposition \ref{p:VCar} by way of a transference argument. We begin with a convexity result.

\begin{lemma}
    Suppose that $\varphi \in \mathcal{C}^1(\mathbb{R}) \cap \mathcal{C}_0(\mathbb{R})$. Then $\varphi$ satisfies $r$-Variational Carleson with constant $O(C_\varphi)$, where
    \[ C_\varphi :=  \| x \varphi'(x) \|_{L^1(\mathbb{R})}.\]
\end{lemma}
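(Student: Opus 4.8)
The plan is to exhibit a general $\varphi \in \mathcal{C}^1(\mathbb{R}) \cap \mathcal{C}_0(\mathbb{R})$ as a continuous superposition of dilated one-sided indicator kernels, and to exploit the fact that the operator underlying the $r$-Variational Carleson condition depends \emph{linearly} on its kernel. Writing, for measurable $R(\cdot), \theta(\cdot)$,
\[ \Lambda^\chi_{R,\theta} f(x) := \int \widehat{\chi}\big(R(x)(\beta - \theta(x))\big)\, \widehat{f}(\beta)\, e(\beta x)\, d\beta, \]
the assignment $\chi \mapsto \Lambda^\chi_{R,\theta} f$ is linear, so any decomposition of $\varphi$ into elementary kernels passes directly to the associated variation functionals $\big(\sum_i |\Lambda^\chi_{R_i,\theta}f - \Lambda^\chi_{R_{i+1},\theta}f|^r\big)^{1/r}$.

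First I would record the dilation bookkeeping. For $s > 0$ the identity $\mathbf{1}_{[0,s]}(t) = \mathbf{1}_{[0,1]}(t/s)$ yields $\widehat{\mathbf{1}_{[0,s]}}(\eta) = s\,\widehat{\mathbf{1}_{[0,1]}}(s\eta)$, hence $\Lambda^{\mathbf{1}_{[0,s]}}_{R,\theta} f = s\, \Lambda^{\mathbf{1}_{[0,1]}}_{sR,\theta} f$. Since the scales $R_i$ range over \emph{arbitrary} measurable functions, relabelling $R_i \mapsto s R_i$ shows that $\mathbf{1}_{[0,s]}$ satisfies $r$-Variational Carleson with constant $s \cdot C(\mathbf{1}_{[0,1]})$; the same holds for $\mathbf{1}_{[-s,0]}$, whose constant equals that of $\mathbf{1}_{[0,1]}$ by the reflection $t \mapsto -t$ (which preserves the functional once $f$, $\theta$, $x$ are simultaneously reflected).

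Next I would decompose $\varphi$ itself. Using $\varphi \in \mathcal{C}_0$ and the fundamental theorem of calculus,
\[ \varphi = -\int_0^\infty \varphi'(s)\, \mathbf{1}_{[0,s]}\, ds + \int_0^\infty \varphi'(-s)\, \mathbf{1}_{[-s,0]}\, ds \]
pointwise. Substituting into $\Lambda^\varphi_{R,\theta}$ by linearity, forming the $r$-variation over a fixed (but arbitrary) increasing sequence $\{R_i\}$, and applying the triangle inequality in $\ell^r$ followed by Minkowski's integral inequality in $L^2(\mathbb{R})$, I would bound the $\varphi$-functional by
\[ \int_0^\infty |\varphi'(s)|\, B_s\, ds + \int_0^\infty |\varphi'(-s)|\, B_s\, ds, \]
where $B_s$ denotes the building-block functional for $\mathbf{1}_{[0,s]}$ (resp. $\mathbf{1}_{[-s,0]}$). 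By the dilation identity each $B_s \leq s\, C(\mathbf{1}_{[0,1]}) \tfrac{r}{r-2} \|f\|_{L^2}$, so the whole expression is at most $O(1)\tfrac{r}{r-2}\|f\|_{L^2} \int_{-\infty}^\infty |x \varphi'(x)|\, dx = O(C_\varphi) \tfrac{r}{r-2}\|f\|_{L^2}$; taking the supremum over all admissible $\{R_i\}$ and $\theta$ then gives the claimed constant $O(C_\varphi)$.

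The remaining---and principal---point is the building-block estimate $C(\mathbf{1}_{[0,1]}) = O(1)$. I stress that this does \emph{not} follow from Proposition \ref{p:VCar} by translating $\mathbf{1}_{[-1/2,1/2]}$: a translation $\chi \mapsto \chi(\cdot - a)$ inserts a factor $e\big(-a R_i(\beta - \theta)\big)$ depending on the scale $R_i$, which therefore cannot be pulled through the $r$-variation in $i$, so translation genuinely fails to preserve the functional. Instead I would derive the one-sided bound directly from the Oberlin-Seeger-Tao-Thiele-Wright variational Carleson theorem underlying Proposition \ref{p:VCar}, whose averaging formulation applies verbatim to one-sided averages (the symmetric kernel of Proposition \ref{p:VCar} being merely the sum $\mathbf{1}_{[-1/2,1/2]} = \mathbf{1}_{[0,1/2]} + \mathbf{1}_{[-1/2,0]}$). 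I expect this one-sided building block---the uniform-in-scale $O(1)$ variational Carleson bound for the non-symmetric dilated indicators---to be the crux; granting it, the superposition and the matching of the dilation weight $s$ against $\varphi'$ to produce $\|x\varphi'\|_{L^1}$ are routine.
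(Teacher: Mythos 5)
Your skeleton (FTC superposition, linearity of $\chi \mapsto \Lambda^\chi_{R,\theta}f$, dilation bookkeeping, Minkowski, matching $s$ against $\varphi'$ to produce $\|x\varphi'(x)\|_{L^1}$) is exactly the paper's, but there is a genuine gap at what you yourself call the crux: the $O(1)$ variational Carleson bound for the \emph{one-sided} block $\mathbf{1}_{[0,1]}$. You are right that this does not follow from Proposition \ref{p:VCar} by translation (the factor $e(-aR_i(\beta-\theta(x)))$ is scale-dependent and cannot cross the variation in $i$), but your proposed substitute is not a proof: the identity $\mathbf{1}_{[-1/2,1/2]} = \mathbf{1}_{[0,1/2]} + \mathbf{1}_{[-1/2,0]}$ gives the implication in the wrong direction (it derives the symmetric bound from one-sided bounds, not conversely), and Remark D.4 of \cite{O+}, as quoted in the paper, concerns only the symmetric averages $\int_{-r_k}^{r_k} f(x-t)e(\theta t)\,\frac{dt}{2r_k}$. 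Concretely, $\mathbf{1}_{[0,1]}(t) = \tfrac12\mathbf{1}_{[-1,1]}(t) + \tfrac12\sgn(t)\mathbf{1}_{[-1,1]}(t)$: the even part is covered by Proposition \ref{p:VCar}, but the odd part is a rough truncated Calder\'{o}n--Zygmund kernel, and a sup-over-$\theta$, variation-over-scales estimate for maximally modulated truncated CZ kernels is a genuinely different (and harder) time--frequency statement than the averaged variational Carleson theorem. Indeed, the paper's own architecture is evidence that this is not ``verbatim'': the very next lemma (the $A[\varphi,\chi]$ comparison, proved via randomization and Carleson's theorem for CZ kernels, following \cite{Fef}, \cite{B+}, \cite{Lie}) exists precisely to control such non-even discrepancies, and it only does so for \emph{lacunary} scales --- which is why lacunarity enters the paper at all.

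By contrast, the paper's proof of this lemma sidesteps the issue entirely: after the FTC identity it inserts the hypothesis ``whenever $\phi$ is even,'' so that the superposition runs over the \emph{symmetric} dilated indicators $\tfrac{1}{2s}\mathbf{1}_{[-s,s]}$, and the building block is exactly Proposition \ref{p:VCar} (here the arbitrariness of the measurable scale functions $R_i$ absorbs the dilation $R_i \mapsto sR_i$, just as in your argument). Every subsequent application of the lemma is to the even parts $\varphi^{\text{E}}_0, \varphi^{\text{E}}_j$, with the passage back to the non-even $\varphi_0, \varphi_j$ handled by the $A[\varphi,\chi]$ lemma. To repair your proposal, either restrict the statement to even $\varphi$ and run your argument with symmetric indicators (which is what the paper does, and suffices for all its uses), or supply an actual proof of the one-sided variational Carleson inequality; the latter cannot be waved through as ``routine.''
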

\begin{proof}
By the fundamental theorem of calculus we may express
\begin{align}
    \phi( t) = -\int \frac{1}{s} \mathbf{1}_{[0,s]}( t) \ s \phi'(s) \ ds,
\end{align}
and thus whenever $\phi$ is even
\begin{align}
    \phi(t) = - \int \frac{1}{2s} \mathbf{1}_{[-s,s]}(t) \ s \phi'(s) \ ds,
\end{align}
so 
\begin{align}
&    \int \big( \widehat{\phi}(R_i(x)(\beta - \theta(x))) - \widehat{\phi}(R_{i+1}(x)(\beta - \theta(x))) \big) \hat{f}(\beta) e(\beta x) \ d\beta\\
& \qquad = - \int \Big( 
\int \big( \widehat{\chi}(sR_i(x)(\beta - \theta(x))) - \widehat{\chi}(sR_{i+1}(x)(\beta - \theta(x))) \big) \hat{f}(\beta) e(\beta x) \Big) \ s \phi'(s) \ ds,
\end{align}
where $\chi = \frac{1}{2} \mathbf{1}_{[-1,1]}$, from which the result follows.
\end{proof}

Since, for the purpose of convergence, we are only interested in lacunary sequences, i.e.\ sequences of the form
\[ \{ M_i \} \subset \lfloor \lambda^{\mathbb{N}} \rfloor, \;\; \; \lambda >1,\]
we introduce the following definition.
\begin{definition}
    A function $\chi : \mathbb{R} \to \mathbb{C}$ is said to satisfy lacunary $r$-Variational Carleson with constant $C$ if the following inequality holds, independent over all choices of (finitely many) measurable functions $\{ R_i \} \subset \lfloor \lambda^{\mathbb{N}} \rfloor, \theta$:
    \begin{align}
&\| \Big( \sum_i |\int \big( \widehat{\chi}(R_i(x)(\beta - \theta(x))) - \widehat{\chi}(R_{i+1}(x)(\beta - \theta(x))) \big) \hat{f}(\beta) e(\beta x)|^r \Big)^{1/r} \|_{L^2(\mathbb{R})}\\
&\qquad \leq C \frac{\lambda}{\lambda - 1} \frac{r}{r-2} \|f\|_{L^2(\mathbb{R})}.
    \end{align}
\end{definition}

We next introduce a quantity that will allow us to pass from one function which satisfies lacunary $r$-Variational Carleson to another:

Given two real-variable functions $\varphi,\chi$, define
\begin{align}
    A[\varphi,\chi]&:= \sup_{\{R\}, \ 0 < s \leq S < \infty} |\sum_{R} \int_{s \leq |t| \leq S}  \varphi(t/R) - \chi(t/R) \ \frac{dt}{R}|  \\
    & \qquad + \sup_{\{R\}, \ t \neq 0} \big( |t|/R \cdot \sum_R |\varphi(t/R) - \chi(t/R)| \big) + \sup_{R, \ t \neq 0} \big( {|t|^2}/{R^2} \cdot \sum_R |\varphi'(t/R) - \chi'(t/R)| \big)\end{align}
where the supremum is over every $2$-lacunary sequence of times, $\{R \} \subset \mathbb{N}$.

\begin{lemma}
Suppose that $\chi$ satisfies lacunary $r$-Variational Carleson constant $C$. Then $\varphi$ satisfies lacunary $r$-Variational Carleson with constant $C + O(A[\varphi,\chi])$.
\end{lemma}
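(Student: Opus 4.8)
The plan is to exploit that the assignment of a kernel to its variational Carleson operator is linear, so that the triangle inequality in the variation norm reduces everything to a single perturbative estimate. Writing $\psi := \varphi - \chi$, I would first observe that for each fixed $x$, $\theta$, and lacunary sequence $\{R_i\} \subset \lfloor \lambda^{\mathbb{N}} \rfloor$, the increment
\[
\int\big(\widehat{\varphi}(R_i(x)(\beta-\theta(x))) - \widehat{\varphi}(R_{i+1}(x)(\beta-\theta(x)))\big)\widehat f(\beta)\,e(\beta x)\,d\beta
\]
splits additively into the corresponding $\chi$-increment and $\psi$-increment. By Minkowski's inequality in $\ell^r$, the variational Carleson quantity for $\varphi$ is pointwise at most the sum of those for $\chi$ and for $\psi$; the former contributes $C\frac{\lambda}{\lambda-1}\frac{r}{r-2}\|f\|_{L^2(\mathbb{R})}$ by hypothesis, so it remains only to prove that the variational Carleson operator attached to $\psi$ is bounded with constant $O(A[\varphi,\chi])\frac{\lambda}{\lambda-1}\frac{r}{r-2}$.

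For the $\psi$-term I would pass to the physical side, where the relevant increment is convolution against the modulated kernel $(K_{R_{i+1}} - K_{R_i})(t)\,e(\theta t)$ with $K_R(t) = \frac{1}{R}\psi(t/R)$. The decisive structural feature is that, since both pieces share the value $\widehat{\psi}(0)$ at the center, the difference multiplier $\widehat{\psi}(R_{i+1}(\beta-\theta)) - \widehat{\psi}(R_i(\beta-\theta))$ vanishes at $\beta = \theta$. I would then invoke a standard variational (Lépingle/jump-type) inequality to dominate the $r$-variation over the lacunary family by the $\ell^2$ square function of consecutive increments, at the cost of the factor $\frac{r}{r-2}$; this is legitimate precisely because consecutive lacunary scales behave like Littlewood–Paley pieces.

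The square function is then estimated by Plancherel: its $L^2$ norm squared equals $\int\big(\sum_i|\Delta_i(\beta - \theta(x))|^2\big)|\widehat f(\beta)|^2\,d\beta$ with $\Delta_i(\xi) := \widehat{\psi}(R_{i+1}\xi) - \widehat{\psi}(R_i\xi)$, so everything reduces to the $\theta$-free quantity $\sup_\xi\sum_i|\Delta_i(\xi)|^2$. I would bound this by $O(A[\varphi,\chi])$: the vanishing at the center, together with the weighted control of $\sum_R\frac{1}{R}|\psi(t/R)|$ and $\sum_R\frac{1}{R^2}|\psi'(t/R)|$ (the second and third terms of $A[\varphi,\chi]$), yields via integration by parts the bounds $|\Delta_i(\xi)|\lesssim\min\big(R_i|\xi|,\,(R_i|\xi|)^{-1}\big)$, whose squares sum geometrically over the lacunary scales to produce $\frac{\lambda}{\lambda-1}$. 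The delicate low-frequency behavior of $\sum_i\Delta_i$ near $\beta=\theta$, where the pointwise decay estimates are weakest, is exactly what the first (truncated-mean) term of $A[\varphi,\chi]$ is designed to absorb.

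The main obstacle is the Carleson feature: the frequency $\theta=\theta(x)$ is an arbitrary measurable function of $x$, so no single modulation can be factored out globally. I expect the crux to be the observation that the vanishing of each difference multiplier at $\beta=\theta$ converts the square-function bound into the shift-invariant supremum $\sup_\xi\sum_i|\Delta_i(\xi)|^2$, thereby eliminating all dependence on $\theta$; the remaining labor is the bookkeeping that matches the three physical-side quantities in $A[\varphi,\chi]$ to the frequency-side estimates on $\Delta_i$ near and away from the origin.
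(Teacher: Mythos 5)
Your opening reduction (split off $\psi := \varphi - \chi$ and apply Minkowski in the variation norm) agrees with the paper, but your treatment of the $\psi$-term fails at its central step. You claim the square function can be ``estimated by Plancherel,'' producing $\int \big( \sum_i |\Delta_i(\beta - \theta(x))|^2 \big) |\widehat{f}(\beta)|^2 \, d\beta$ and hence the $\theta$-free quantity $\sup_\xi \sum_i |\Delta_i(\xi)|^2$. This is not valid: when $\theta = \theta(x)$ is an arbitrary measurable function, $f \mapsto \int \Delta_i(\beta - \theta(x)) \widehat{f}(\beta) e(\beta x)\, d\beta$ is not a Fourier multiplier operator, and no Plancherel identity holds (note the dangling $x$-dependence inside your $\beta$-integral, which already signals that the formula does not typecheck). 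Nor does the vanishing of each $\Delta_i$ at $\beta = \theta$ rescue the computation: the multiplier differences arising in Carleson's theorem itself also vanish at the variable frequency, so if this kind of argument were correct, the Carleson--Hunt theorem would follow from Plancherel. In other words, the ``main obstacle'' you identify in your closing paragraph is precisely the point of the lemma, and your proposed way around it is not a proof. The paper's proof confronts it with a genuinely deep input: after dominating the $r$-variation of the $\psi$-terms by the square function $\big(\sum_R |\cdot|^2\big)^{1/2}$ of the individual terms, it randomizes via Khintchine's inequality to replace the square function by $|\sum_R \epsilon_R(\cdots)|$, and then applies Carleson's theorem for maximally modulated Calder\'{o}n--Zygmund operators (implicit in \cite{Fef}; see \cite{B+}, \cite{Lie}) to the kernels $\sum_R \epsilon_R \frac{1}{R}\psi(\cdot/R)$, whose Calder\'{o}n--Zygmund constants are $O(A[\varphi,\chi])$ uniformly in the signs --- this is exactly what the three terms of $A[\varphi,\chi]$ (truncated cancellation, size, and derivative bounds) are built to control. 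Your frequency-side bookkeeping with $\min\{R_i|\xi|, (R_i|\xi|)^{-1}\}$ cannot substitute for this.

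A secondary flaw: your appeal to a ``standard variational (L\'{e}pingle/jump-type) inequality'' to dominate the $r$-variation by the $\ell^2$ sum of \emph{consecutive} increments, at the cost of $\frac{r}{r-2}$, is false for general sequences (for a slowly increasing monotone sequence the consecutive-increment square function is $O(1)$ while the $r$-variation is large; L\'{e}pingle is a theorem about martingales, not arbitrary sequences). Fortunately this step is unnecessary: for $r > 2$ one has the trivial bound $\mathcal{V}^r(a_i : i) \leq 4 \big( \sum_i |a_i|^2 \big)^{1/2}$ on the terms themselves, uniformly in $\theta$, which is all the paper uses and costs no factor of $\frac{r}{r-2}$. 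But repairing this still leaves you facing the maximally modulated square function, for which the randomization-plus-Carleson argument above is the essential, and in your proposal missing, ingredient.
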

\begin{proof}
Set $\psi := \varphi - \chi$; it suffices to bound
\begin{align}
    \| \sup_\theta \big( \sum_R |\int \psi(R(\beta - \theta)) \hat{f}(\beta) e(\beta x) \ d\beta|^2 \big)^{1/2} \|_{L^2(\mathbb{R})} \lesssim A[\phi,\chi] \| f \|_{L^2(\mathbb{R})}
\end{align}
for any $2$-lacunary sequence $\{ R_i\}$. The result follows from randomization and Carleson's Theorem for Calder\'{o}n-Zygmund Kernels, a result implicitly contained in e.g.\ \cite{Fef}, see also \cite{B+} or \cite{Lie} for the explicit result along with a significant strengthening.
\end{proof}

We now begin transfering to the discrete context.
\begin{lemma}
    Suppose that $\varphi$ satisfies lacunary $r$-variational Carleson with constant $C$, and has Fourier support inside $\{|\xi| \leq \epsilon^{-1} \}$, and that all times $M \in \lfloor \lambda^{\mathbb{N}} \rfloor  \subset \mathbb{N}$. Then
    \begin{align}
      \|  \sup_\theta \mathcal{V}^r( \sum_m \varphi_M(m) g(a+m) e(m \theta) : M) \|_{\ell^2(\mathbb{Z})} \lesssim C \epsilon^{-1} \frac{r}{r-2}  \frac{\lambda}{\lambda -1} \| g\|_{\ell^2(\mathbb{Z})}. 
    \end{align}
\end{lemma}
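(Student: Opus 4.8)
The plan is to transfer the continuous hypothesis to $\ell^2(\mathbb Z)$ by a sampling/aliasing argument, with the factor $\epsilon^{-1}$ simply bookkeeping the number of aliased frequency copies. First I would lift: given $g\in\ell^2(\mathbb Z)$, let $F\in L^2(\mathbb R)$ be the band-limited extension with $\widehat F=\widehat g\cdot\mathbf 1_{[-1/2,1/2]}$, so that $\|F\|_{L^2(\mathbb R)}=\|g\|_{\ell^2(\mathbb Z)}$ and $F(a)=g(a)$ for $a\in\mathbb Z$. Write $B_M^\theta$ for the continuous modulated averaging operator with Fourier multiplier $\widehat\varphi(M(\beta-\theta))$ --- precisely the object appearing in the lacunary $r$-Variational Carleson inequality --- and set $\mathcal CF:=\sup_\theta\mathcal V^r(B_M^\theta F:M)$.

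Second, a Poisson-summation computation identifies the discrete multiplier of $A_M^\theta g(a):=\sum_m\varphi_M(m)e(m\theta)g(a+m)$, on $\beta\in\mathbb T\cong[-1/2,1/2]$, as the periodization $\sum_{k\in\mathbb Z}\widehat\varphi(M(\beta+\theta-k))$. Since $\widehat F$ is supported in $[-1/2,1/2]$, each summand is exactly the multiplier of $B_M^{\theta-k}F$, whose Fourier support already lies in $[-1/2,1/2]$; comparing the discrete and continuous inversion formulas at integer points yields the pointwise identity $A_M^\theta g(a)=\sum_k B_M^{\theta-k}F(a)$ for $a\in\mathbb Z$, a finite sum at each scale. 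Because $\mathcal V^r$ is a norm, I can apply the triangle inequality in $k$ for any fixed $\theta$ and scale-sequence, and since $B_M^{\theta-k}F$ is nonzero on $[-1/2,1/2]$ only when $\operatorname{dist}(k-\theta,[-1/2,1/2])\le 1/(M\epsilon)\le\epsilon^{-1}$ (recall $M\ge1$), at most $O(\epsilon^{-1})$ values of $k$ contribute. Dominating each term by $\mathcal CF(a)$ then gives the pointwise bound $\sup_\theta\mathcal V^r(A_M^\theta g:M)(a)\lesssim\epsilon^{-1}\,\mathcal CF(a)$ for $a\in\mathbb Z$; this is where the factor $\epsilon^{-1}$ enters.

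Third, I would feed in the hypothesis. Linearizing the suprema over $\theta$ and over increasing subsequences into measurable functions $\theta(x)$ and $\{R_i(x)\}\subset\lfloor\lambda^{\mathbb N}\rfloor$, and invoking that $\varphi$ satisfies lacunary $r$-Variational Carleson with constant $C$, yields $\|\mathcal CF\|_{L^2(\mathbb R)}\lesssim C\,\tfrac{\lambda}{\lambda-1}\,\tfrac{r}{r-2}\,\|F\|_{L^2(\mathbb R)}=C\,\tfrac{\lambda}{\lambda-1}\,\tfrac{r}{r-2}\,\|g\|_{\ell^2(\mathbb Z)}$.

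Finally --- and this is the step I expect to be the main obstacle --- I must pass from the sampled $\ell^2(\mathbb Z)$ norm of the \emph{nonlinear} quantity $\mathcal CF$ back to its $L^2(\mathbb R)$ norm, i.e.\ establish $\|\mathcal CF\|_{\ell^2(\mathbb Z)}\lesssim\|\mathcal CF\|_{L^2(\mathbb R)}$. The key is that every building block $B_{M_i}^\theta F-B_{M_{i-1}}^\theta F$ is band-limited to the \emph{fixed} interval $[-1/2,1/2]$; fixing a Schwartz $\phi$ with $\widehat\phi\equiv1$ on $[-1/2,1/2]$, each such difference reproduces as $D=D*\phi$, so $|D(a)|\le(|D|*|\phi|)(a)$, and Minkowski's inequality in $\ell^r$ followed by $\sup_\theta$ upgrades this to $\mathcal CF(a)\le(\mathcal CF*|\phi|)(a)\lesssim M_{\text{HL}}(\mathcal CF)(a)$. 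A routine $TT^*$ computation then bounds the sampling operator $H\mapsto(H*|\phi|)|_{\mathbb Z}$ from $L^2(\mathbb R)$ to $\ell^2(\mathbb Z)$, since $SS^*$ is convolution on $\mathbb Z$ with the restriction of the Schwartz function $|\phi|*|\widetilde\phi|$, whence $\|S\|\le\big(\sum_{n\in\mathbb Z}|(|\phi|*|\widetilde\phi|)(n)|\big)^{1/2}=O(1)$. Combining the three displays closes the estimate. The genuinely delicate point is that this sampling comparison must survive the nonlinearity of $\mathcal V^r$ and the supremum over $\theta$, which is exactly why the reproducing-kernel/Minkowski maneuver --- rather than a bare Plancherel--P\'olya inequality applied to a single band-limited function --- is required.
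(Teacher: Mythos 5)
Your argument is correct, and it rests on the same underlying mechanism as the paper's proof --- transferring the continuous lacunary variational Carleson hypothesis to $\ell^2(\mathbb{Z})$ via band-limitation and Poisson summation --- but the implementation is genuinely different. The paper first discards the $O(\epsilon^{-1}\frac{\lambda}{\lambda-1})$ lacunary scales $M < 2^{\epsilon^{-1}}$ by the triangle inequality (this is where \emph{its} factor $\epsilon^{-1}$ enters), so that each surviving multiplier $\widehat{\varphi}(M(\beta-\theta))$ has support of width $\ll 1$; it then splits $\sup_\theta$ into five arcs, discretizes $\theta$ and truncates the scale range by monotone convergence, precisely so that $\sup_\theta \mathcal{V}^r$ becomes the norm of a (finite-dimensional) Banach-space-valued \emph{Fourier multiplier} operator, and concludes by citing Magyar--Stein--Wainger transference \cite[Proposition 2.1]{MSW}; its Fej\'er-kernel decomposition of $g$ plays the role of your Shannon extension. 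You instead keep all scales $M \geq 1$ and pay $\epsilon^{-1}$ as the aliasing count in the identity $A_M^\theta g(a) = \sum_k B_M^{\theta-k}F(a)$; the price is that you must prove the sampling inequality $\|\mathcal{C}F\|_{\ell^2(\mathbb{Z})} \lesssim \|\mathcal{C}F\|_{L^2(\mathbb{R})}$ for the \emph{nonlinear} quantity $\mathcal{C}F$ --- exactly the step the paper's linearize-then-cite-MSW route is engineered to avoid, since multiplier transference does not apply to variable-coefficient objects --- and your reproducing-kernel/Minkowski maneuver handles it correctly, because every difference $B^\theta_{M_i}F - B^\theta_{M_{i-1}}F$ is band-limited to $[-1/2,1/2]$ uniformly in $\theta$ and in the scales. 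In effect you reprove the relevant case of MSW transference by hand; this buys a self-contained argument in which the origin of $\epsilon^{-1}$ is transparent, whereas the paper's reductions buy brevity at the cost of an external black box. Two small points deserve explicit mention in a write-up: the set of contributing $k$ should be fixed uniformly in $M$ (which is possible, since $B_M^{\theta-k}F \equiv 0$ for every $M \geq 1$ once $\mathrm{dist}(k-\theta,[-1/2,1/2]) > \epsilon^{-1}$), so that the $\mathcal{V}^r$ triangle inequality is applied to a single fixed finite sum rather than a scale-dependent one; and $|\phi|*|\widetilde{\phi}|$ is rapidly decaying but not Schwartz, which is all your $TT^*$ bound actually needs.
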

\begin{proof}
By convexity and lacunarity we can assume that $M \geq 2^{\epsilon^{-1}}$.
If we bound
\begin{align}
    \mathcal{V}^r( \sum_m \varphi_M(m) g(a+m) e(m \theta) : M) \leq \sum_{0 \leq l \leq 4} \sup_{l/5 \leq \theta < (l+1)/5} \mathcal{V}^r (\sum_m \varphi_M(m) g(a+m) e(m \theta)  : M )
\end{align}
then it suffices to each summand in $l$ individually; by monotone convergence, it suffices to estimate the $\ell^2$ norms of
\[ \sup_{\theta \in [l/5,(l+1)/5) \cap \frac{1}{T} \cdot \mathbb{Z}} \mathcal{V}^r( \sum_m \varphi_M(m) g(a+m) e(m \theta) : M \leq T), \]
independent of $T$; the result then follows from Magyar-Stein-Wainger transference \cite[Proposition 2.1]{MSW} and the hypothesis.
\end{proof}

The proof of Proposition \ref{p:vCar0} now follows from a standard dyadic decomposition.
\begin{proof}[Proof of Proposition \ref{p:vCar0}]
By the Riemann-Lebesgue lemma, $\varphi$ is smooth and satisfies
\begin{align}
    |\widehat{\varphi}(\xi)| \lesssim_A (1 + \epsilon |\xi|)^{-A}
\end{align}
for sufficiently large $A$; set 
\[ \varphi^{\text{E}}(t) := \frac{\varphi(t) + \varphi(-t)}{2},\] 
which satisfies the same Fourier decay, and decompose
\begin{align}
\varphi := \varphi_0 + \sum_{j \geq 1} \varphi_j, \; \; \; \varphi^{\text{E}} = \varphi^{\text{E}}_0 + \sum_{j \geq 1} \varphi^{\text{E}}_j
\end{align}
where
\[ \widehat{\varphi_j}(\xi) := \widehat{\varphi}(\xi) \cdot \psi(\xi \epsilon^2/2^j), \]
and $\varphi_0$ is defined by subtraction,
and similarly for $\varphi^{\text{E}}_j$, where 
\begin{align}
    \sum_{k} \psi(\xi/2^k) = \mathbf{1}_{\xi \neq 0}
\end{align}
and $\psi$ is a smooth approximation to $\mathbf{1}_{|\xi| \approx 1}$.

Note that $\varphi^{\text{E}}_0$ satisfies $r$-Variational Carleson with constant $O(\epsilon^{-O(1)})$, and similarly
\[ A[\varphi_0,\varphi^{\text{E}}_0] \lesssim \epsilon^{-O(1)},\] so $\varphi_0$ satisfies the lacunary $r$-Variational Carleson with constant $O(\epsilon^{-O(1)})$. And, for each $j \geq 1$, $\widehat{\varphi_j}, \widehat{\varphi^{\text{E}}_j}$ are supported in $\{ |\xi| \lesssim \epsilon^{-2} 2^j \}$, and that
\[ C_{\varphi^{\text{E}}_j}, \ A[\varphi_j,\varphi^{\text{E}}_j] \lesssim \epsilon^{10} 2^{-10 j} \]
(say), so the result follows from the triangle inequality.
\end{proof}

This concludes our work on linear modulations; for the remainder of the paper, all polynomials will be assumed to vanish to degree two at the origin, see \eqref{e:Pd}. 

\section{Polynomial Wiener Wintner}\label{s:poly0}

It will be convenient to adopt a singular integral perspective. Thus, fix a constant of lacunarity $1 < \lambda \leq 2$, and set
\[ \psi(t) := \varphi(t) - \lambda^{-1} \varphi(t/\lambda), \; \; \; \psi_k(t) := \lambda^{-k} \psi(\lambda^{-1} t),\]
so that $\{ \psi_k \}$ are mean-zero, supported in $\{ \lambda^k/10 \leq |x| \leq 10 \lambda^k \}$ and satisfy
\begin{align}
    \frac{\|{\psi_k}\|_{L^\infty(\mathbb{R})}}{\lambda^k} + \frac{\|\psi_k'\|_{L^\infty(\mathbb{R})}}{\lambda^{2k}} \leq C < \infty
\end{align}
uniformly in $k$; consolidate
\begin{align}
    \Psi_k(t) := \sum_{1 \leq j \leq k} \psi_j(t), \; \; \; 
    \Psi_k^s(t) := \sum_{2^{s/A_0} \leq j \leq k} \psi_j(t).
\end{align}

By telescoping appropriately and applying Calder\'{o}n's transference principle as above, our task is to prove the following proposition.

\begin{proposition}\label{p:var00}
The following estimate holds:
\begin{align}
    \| \mathcal{V}^r_d f \|_{\ell^2(\mathbb{Z})} \lesssim_{\epsilon_0,\lambda,d} (\frac{r}{r-2})^2 \|f \|_{\ell^2(\mathbb{Z})},
\end{align}
where
\begin{align}\label{e:vrd}
\mathcal{V}^r_d f(x) := \sup_{P \in \mathcal{P}_d} \sup \Big( \sum_{i} 
    |\sum_{n} \big( \Psi_{k_i}(n) - \Psi_{k_{i-1}}(n) \big)  e(P(n)) f(x-n)|^r \Big)^{1/r}
\end{align}
where the inner supremum is over all finite increasing subsequence $\{ k_i \}$.
\end{proposition}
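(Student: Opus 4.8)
The plan is to prove Proposition \ref{p:var00} by combining a high-frequency multi-frequency variational estimate with a low-frequency error term that can be controlled by the linear theory already established, leveraging the number-theoretic input from \cite{KSW} as a black box. The central observation is that the modulated multipliers $\widehat{e(P(\cdot))\Psi_k}(\xi)$ exhibit, via Weyl differencing and the circle method, a decomposition into pieces concentrated near rationals with small denominators (the major arcs) plus a rapidly decaying minor-arc remainder. I would first perform a dyadic-in-scale decomposition, writing $\mathcal{V}^r_d$ as a superposition over scales $k$ of the operators with kernels $\psi_k \cdot e(P(\cdot))$, and then split the frequency variable according to whether we are near a rational $a/q$ with $q \leq Q(k)$ for an appropriate threshold $Q(k)$ growing with $k$, or on the minor arcs.

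\medskip

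The key steps, in order, are as follows. First, I would establish the minor-arc estimate: on the complementary set of frequencies, Weyl's inequality (imported directly from \cite{KSW}, as the excerpt promises the number-theoretic input transfers verbatim) yields polynomial decay in $q$, which summed over scales gives an $\ell^2 \to \ell^2$ bound with acceptable dependence, and the $r$-variation of this piece is controlled by the square function since we are away from any resonance. Second, and most substantively, I would handle the major arcs by a \emph{variable-coefficient multi-frequency variation estimate}: near each rational $a/q$ the modulated average behaves like a linear (Gaussian-sum-weighted) translate of a smooth averaging operator, and the supremum over $P \in \mathcal{P}_d$ translates into a supremum over the finitely many frequencies $a/q$ that are simultaneously active. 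This is where the quantitative $(r/(r-2))^2$ gain must emerge: I would bound the variation over the collection of these frequencies using a multi-frequency variational inequality of the type in \cite{GZK}, where the two factors of $r/(r-2)$ arise from iterating a single-frequency variational Carleson-type bound (one factor) against the square-function control of the number of competing frequencies (the second factor).

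\medskip

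The transference back to the continuous setting and the sampling onto the integer lattice would proceed via Magyar-Stein-Wainger \cite{MSW}, exactly as in \S\ref{s:lin}, so that the discrete major-arc operator is comparable to a sampled continuous operator to which the variable-coefficient variation estimate applies. The interpolation remark in the overview — upgrading from $\ell^2$ to $\ell^p$ for $r > \max\{p,p'\}$ — follows from interpolating the $L^2$ bound against the maximal/variational $\ell^p$ estimate already available in \cite{KSW}, so I would not belabor it here and simply cite that endpoint.

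\medskip

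The hard part, I expect, is the variable-coefficient multi-frequency variation estimate on the major arcs. The difficulty is that the relevant frequencies $\{a/q\}$ are not fixed but vary with the scale $k$ (as the denominator threshold $Q(k)$ grows), so one cannot apply a fixed multi-frequency inequality uniformly; instead one must track how the frequency set refines across scales and ensure the variational norm does not accumulate logarithmic losses in the number of frequencies. Controlling this requires a careful orthogonality argument — essentially showing that contributions from distinct denominators $q$ are nearly orthogonal in $\ell^2$ after the smooth cutoffs, so that the square function over frequencies is bounded independently of $Q(k)$ — combined with a variation-norm (rather than merely maximal) version of the multi-frequency estimate, which is precisely the novel analytic ingredient the introduction advertises. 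Getting the quadratic dependence $(r/(r-2))^2$ rather than a worse power, while keeping all constants independent of $H$ and of the number of active frequencies, is the technical crux.
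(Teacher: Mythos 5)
Your architecture does track the paper's at a high level: both run through the circle-method decomposition of \cite{KSW} (Weyl-sum-weighted major-arc multipliers), a variable-coefficient multi-frequency variation estimate in the spirit of \cite{GZK}, and Magyar--Stein--Wainger transference \cite{MSW}. But there are two genuine gaps. The first is your claim that on the major arcs ``the supremum over $P \in \mathcal{P}_d$ translates into a supremum over the finitely many frequencies $a/q$ that are simultaneously active.'' This is false: after approximating by the multipliers $L^s_{J,\vec{\lambda}}$, each polynomial retains a \emph{continuous} residual coefficient vector $\vec{\mu} = \vec{\lambda} - \vec{A}/Q$, small (of size $2^{-10s-10}$) but nonzero, and the modulation $e(P_{\vec{\mu}}(t))$ is not perturbative at scales $j$ where $\| P_{\vec{\mu}}(2^j \cdot) \| \gtrsim 1$; smallness of the coefficients buys nothing once $|\mu_k| 2^{jk}$ is large. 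The paper spends the entire proof of Proposition \ref{p:final} on exactly this point, splitting the scales $j$ into three regimes according to $\| P_{\vec{\mu}}(2^j\cdot)\|$: scales where it is $\leq 2^{-A_1 s}$ (absorbed into $2^{-10s}M_{\text{HL}}f$), the $O_d(s)$ intermediate scales (each costing $2^{-cs}$ via the convexity Lemma \ref{c:convex}), and scales where it is $\geq 2^{A_1 s}$, which require the Stein--Wainger $TT^*$ oscillatory estimate \cite[\S 14]{BOOK} to produce $2^{-cl}$ decay beating the $2^{sd}$ count of rationals. Without some version of this step your argument does not close.

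The second gap is that the step you yourself flag as the crux --- the variation-norm multi-frequency estimate with scale-dependent frequency sets --- is left as a description of the difficulty rather than an argument, and your guess at its mechanism is off. The paper proves it as Proposition \ref{p:lowfreqvar00} by metric chaining in the style of \cite{KJump}: freeze $x$ on intervals $I$ of length $|I_s|$ (legitimate because all active scales exceed $2^{s/A_0}$, so the error is $2^{-10s}M_{\text{HL}}f$), run an entropy decomposition of the vector-valued family $\big(\Psi_j * F_{B/Q}(x_I)\big)_{B/Q}$ at dyadic radii $2^{-v}$, and at each level interpolate --- this is the $\min$ in \eqref{e:minest} --- between a crude Cauchy--Schwarz bound (costing $2^s$ but with the $\ell^r$ jump count) and the Weyl-sum sequence-space estimate of Corollary \ref{c:seqspace} (giving the $2^{-cs}$ gain with the $\ell^2$ jump count). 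The two factors of $r/(r-2)$ come from the Jones--Seeger--Wright variational bound \eqref{e:bounds} (via \cite[Theorem 1.2]{JSW}) and from the chaining sum, not from ``iterating a single-frequency variational Carleson-type bound'': variational Carleson is the engine of the \emph{linear} case in \S \ref{s:lin}, where modulation invariance forces it, and is neither available nor needed here. Since this quantitative heart is missing, the proposal is a plausible outline of the paper's strategy rather than a proof.
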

\begin{remark}
    By interpolating this proposition with the $r = \infty$ version addressed in \cite{KSW}, the following norm estimates present:
    \begin{align}
        \| \mathcal{V}^r_d f \|_{\ell^p(\mathbb{Z})} \lesssim_{\epsilon_0,\lambda,d,p} (\frac{r}{r-2})^2 \|  f\|_{\ell^p(\mathbb{Z})}, \; \; \; r > \max\{p,p'\}.
    \end{align}
\end{remark}

\subsection{Preliminaries}
Let $A_0 \in \mathbb{N}$ be large but fixed, and for $P \in \mathcal{P}_d$, abbreviate
\begin{align}
    P_{\vec{\lambda}}(t) := \sum_{j=2}^d \lambda_j t^j.
\end{align}
For $P \in \mathbb{R}[\cdot]$, we use
\[ N_{2^k}(P)\]
to denote the \emph{coefficient norm} of $P$ at scale $2^k$, see \cite[Definition 3.1]{KSW}. We will use $\vec{\lambda}$ to denote elements of $\mathbb{T}^{d-1}$, $\vec{\lambda} = (\lambda_2,\dots,\lambda_d)$, and similarly use the notation
\[ \vec{A} := (A_2,\dots,A_d,Q) \]
where $\vec{A} \in \mathbb{Z}^{d-1}$ so that in particular $(\vec{A},Q) = 1$ means that $(A_2,\dots,A_d,Q) = 1$.

We apply the decomposition of \cite[Proposition 4.2]{KSW}; specifically, we have the following lemma.

\begin{lemma}\label{l:gettingstarted}
The following pointwise bound holds:
\begin{align}
\mathcal{V}^r_d f \leq \sum_{s \geq 1} \mathcal{A}_s f + \mathcal{E} f + O(M_{\text{HL}}f) + \mathcal{H}^r f,
\end{align}
where $\mathcal{E}$ is exactly as in \cite[Proposition 4.2]{KSW}, with bounded $\ell^2(\mathbb{Z})$ operator norm, $\mathcal{H}^rf$ is the $r$-variation of a truncated singular integral (whose $\ell^2$-operator norm is $O(\frac{r}{r-2})$), and
\begin{align}
    \mathcal{A}_s f(x) := \mathcal{V}^r \big( \sum_{k=2^{s/A_0}}^{k_0} \sum_m \psi_k(m) e(P_{\vec{\lambda}(x)}(m))
    f(x-m) \cdot \mathbf{1}_{n : N_{2^k}(P_{\vec{\lambda}}(n)) = 2^s}(x) : k_0 \geq 2^{s/A_0} \big).
\end{align}
\end{lemma}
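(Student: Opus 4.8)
The plan is to import the kernel-level circle-method decomposition of \cite[Proposition 4.2]{KSW} and feed it into the $r$-variation, exploiting that $\mathcal{V}^r$ is a seminorm. First I would linearize the supremum over $P \in \mathcal{P}_d$ by a measurable selection, replacing the free polynomial with an $x$-dependent one $P_{\vec{\lambda}(x)}$, so that $\mathcal{V}^r_d f(x) = \mathcal{V}^r(A^{P_{\vec{\lambda}(x)}}_k f(x) : k)$, where $A^P_k f := \sum_n \Psi_k(n) e(P(n)) f(x-n)$ and the increments driving the variation are $\Psi_{k_i} - \Psi_{k_{i-1}} = \sum_{k_{i-1} < j \leq k_i} \psi_j$. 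The crucial structural fact I would then invoke is subadditivity: if $A^P_k = (\text{main})_k + \mathcal{E}_k + \dots$ is any additive splitting holding pointwise in $k$, then $\mathcal{V}^r(A^P_k) \leq \sum_{\text{pieces}} \mathcal{V}^r(\cdot)_k$, so it suffices to split the convolution kernels and take the variation of each resulting family separately.

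Next I would apply \cite[Proposition 4.2]{KSW} at the level of the individual operators $\sum_m \psi_k(m) e(P(m)) f(x-m)$: that proposition expresses this modulated convolution as a major-arc main term governed by the rational approximations to the coefficients of $P$, plus the error operator $\mathcal{E}$ of bounded $\ell^2(\mathbb{Z})$-operator norm. Stratifying the main term by the dyadic value $2^s$ of the coefficient norm $N_{2^k}(P_{\vec{\lambda}})$ produces exactly the summands $\mathcal{A}_s f$, with the scale restriction $k \geq 2^{s/A_0}$ inherited from the minor-arc threshold of \cite{KSW}; the fully trivial stratum (the complete major arc) collapses to the truncated singular integral $\sum_{j \leq k} \psi_j$ applied to $f$, whose partial-sum variation is precisely $\mathcal{H}^r f$, while the remaining crude remainder is dominated pointwise by $M_{\text{HL}} f$. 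Assembling these and applying $\mathcal{V}^r$ subadditively yields the claimed pointwise bound $\mathcal{V}^r_d f \leq \sum_{s \geq 1} \mathcal{A}_s f + \mathcal{E} f + O(M_{\text{HL}} f) + \mathcal{H}^r f$.

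The hard part will be the transference from the maximal ($r=\infty$) formulation of \cite{KSW} to the variational one, which demands care on two fronts. First, the decomposition must hold for every partial sum $A^P_{k}$ and not merely for the maximal operator, so that subadditivity of $\mathcal{V}^r$ legitimately applies; this is available because the splitting of \cite[Prop 4.2]{KSW} is genuinely pointwise in $k$, so the same error operator $\mathcal{E}$ appears additively. Second, and more delicate, is certifying that the major-arc/smooth part is honestly a truncated Calderón--Zygmund singular integral whose $r$-variation $\mathcal{H}^r f$ carries the sharp operator norm $O(\frac{r}{r-2})$: here I would use that each $\psi_k$ is mean-zero with the stated size and derivative bounds, so that $\{\sum_{j \leq k} \psi_j\}_k$ is a family of truncations of a standard kernel, to which the classical variational singular-integral estimate applies. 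I expect the bookkeeping that matches each piece of the \cite{KSW} decomposition to its correct variational counterpart to be the most technical step, but no new number-theoretic input is required beyond what \cite{KSW} already supplies.
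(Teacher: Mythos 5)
Your proposal is correct and takes essentially the same route as the paper, which states this lemma without a written proof, presenting it as an immediate consequence of the kernel decomposition of \cite[Proposition 4.2]{KSW}. The details you supply --- linearizing the supremum over $\mathcal{P}_d$, exploiting that the \cite{KSW} splitting is additive and pointwise in the scale parameter so that subadditivity of $\mathcal{V}^r$ legitimately applies, and identifying the small-coefficient stratum with a truncated singular integral whose $r$-variation obeys the $O(\frac{r}{r-2})$ bound of \cite{JSW} --- are precisely the steps the paper leaves implicit.
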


In particular, to prove Proposition \ref{p:var00} it suffices to prove that 
\begin{align}\label{e:As}
\| \mathcal{A}_s f \|_{\ell^2(\mathbb{Z})} \lesssim_{\epsilon_0,\lambda,d} (\frac{r}{r-2})^2 2^{-cs} \|f \|_{\ell^2(\mathbb{Z})} \text{ for some } c > 0.
\end{align}

Next, by \cite[Proposition 5.6]{KSW}, we may replace $\mathcal{A}_sf$ with
\begin{align}
    \mathcal{V}^r_{s,d} f  := \sup_{\vec{\lambda}} \mathcal{V}^r( (L^s_{J,\vec{\lambda}})^{\vee}*f : J )
\end{align}
where
\[ \{ L_{J,\vec{\lambda}}^s \}\]
are Fourier multipliers, defined below:
\begin{align}
    L_{J,\vec{\lambda}}^s(\beta) = \sum_{(\vec{A},Q) = 1: 2^{s-1} \leq Q < 2^s} \sum_{B \leq Q} S(\vec{A}/Q,B/Q) &\Phi_{J,\vec{\lambda} - \vec{A}/Q}^*(\beta - B/Q) \chi_s(\beta - B/Q) \\
    & \qquad \times \mathbf{1}_{\| \lambda_j - A_j/Q \|_{\mathbb{T}} \leq 2^{-10s-10}, \ 2 \leq j \leq d}
\end{align}
where
\begin{align}
S(\vec{A}/Q,B/Q) := \frac{1}{Q} \sum_{r \leq Q} e(-\frac{A_2 r^2 + \dots + A_d r^d + r B}{Q})    
\end{align}
are complete Weyl sums; 
\begin{align}
        \Phi_{J,\vec{\lambda}}^{\vee}(x) = \Psi^s_J(x) e(-P_{\vec{\lambda}}(x)) \cdot \mathbf{1}_{\| \lambda_k \|_{\mathbb{T}} \leq J^{A_0} 2^{-kJ}, \ 2 \leq k \leq d} 
\end{align}
are polynomially modulated truncated singular kernels, and
\[ \mathbf{1}_{|\beta| \leq 2^{-2^{s/10A_0}}} \leq \chi_s \leq \mathbf{1}_{|\beta| \leq 2^{1-2^{s/10A_0}}} \]
is a smooth cut-off.

The key property of the $\{ L_{J,\vec{\lambda}}^s \}$ that we will use is encoded in the behavior of the Weyl sums, which satisfy
\[ \sup_{(\vec{A},B,Q) = 1} |S(\vec{A}/Q,B/Q)| \leq Q^{-c_d} \]
for some absolute $c_d > 0$ by standard Weyl sum estimates, see e.g.\ \cite[Appendix B]{BOOK}, but also stronger orthogonality estimates, see \cite[Lemma 6.3]{KSW}:

\begin{lemma}\label{l:maxest}
There exists an absolute $c=c_d > 0$ so that the following maximal estimate holds:
\begin{align}
\| \sup_{(\vec{A},Q) = 1, \ 2^{s-1} \leq Q < 2^s} |\int \sum_{B \leq Q} S(\vec{A}/Q,B/Q) \chi_s(\beta - B/Q) \hat{f}(\beta) e(\beta x)| \|_{\ell^2(\mathbb{Z})} \lesssim 2^{-cs} \|f \|_{\ell^2(\mathbb{Z})}.
\end{align}
\end{lemma}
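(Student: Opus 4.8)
The goal is to prove the maximal estimate in Lemma \ref{l:maxest}, namely that
\[ \Big\| \sup_{(\vec{A},Q)=1,\ 2^{s-1}\leq Q<2^s} \Big| \int \sum_{B\leq Q} S(\vec{A}/Q,B/Q)\chi_s(\beta-B/Q)\hat f(\beta)e(\beta x)\, d\beta \Big| \Big\|_{\ell^2(\mathbb{Z})} \lesssim 2^{-cs}\|f\|_{\ell^2(\mathbb{Z})}. \]
The plan is to exploit two competing sources of gain and balance them. The first is the pointwise Weyl-sum decay $\sup_{(\vec A,B,Q)=1}|S(\vec A/Q,B/Q)|\leq Q^{-c_d}$, which on its own controls each fixed denominator $Q$. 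The second is that the cutoffs $\chi_s(\beta - B/Q)$ localize to tiny arcs of width $\approx 2^{-2^{s/10A_0}}$ around the rationals $B/Q$, and for distinct $(\vec A,Q)$ these collections of arcs are almost disjoint, which is what will let me control the supremum over the exponentially many pairs $(\vec A,Q)$.

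First I would reduce the supremum to an $\ell^2$-summable square function: since there are at most $O(2^{(d-1)s}\cdot 2^s)$ admissible tuples $(\vec A,Q)$ with $2^{s-1}\leq Q<2^s$, I would bound the sup by the $\ell^2$-sum
\[ \sup_{(\vec A,Q)} |T_{\vec A,Q}f| \leq \Big( \sum_{(\vec A,Q)} |T_{\vec A,Q}f|^2 \Big)^{1/2}, \]
where $T_{\vec A,Q}$ denotes the individual multiplier operator. By Plancherel, $\|T_{\vec A,Q}f\|_{\ell^2}^2$ equals the integral of $|\hat f|^2$ against $|\sum_{B\leq Q}S(\vec A/Q,B/Q)\chi_s(\beta-B/Q)|^2$; because the arcs $\{\beta:\ \chi_s(\beta-B/Q)\neq 0\}$ for $B\leq Q$ are disjoint (their width is far smaller than $1/Q$), the cross terms vanish and this becomes $\sum_{B\leq Q}|S(\vec A/Q,B/Q)|^2\int|\hat f|^2\chi_s(\beta-B/Q)^2$. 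Summing over $(\vec A,Q)$ and using the Weyl bound $|S|\leq Q^{-c_d}\approx 2^{-c_d s}$ on each term, I would arrange the total to land on $\int |\hat f(\beta)|^2 \big(\sum_{(\vec A,Q),B}|S(\vec A/Q,B/Q)|^2\chi_s(\beta-B/Q)^2\big)\,d\beta$, where the inner multiplier weight is what must be shown to be $O(2^{-2cs})$ pointwise in $\beta$.

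The crux is therefore the pointwise bound on the weight $W(\beta):=\sum_{(\vec A,Q),B}|S(\vec A/Q,B/Q)|^2\chi_s(\beta-B/Q)^2$. For a fixed $\beta$, only those rationals $B/Q$ within $2^{-2^{s/10A_0}}$ of $\beta$ contribute, and by a standard spacing/Farey argument two distinct reduced fractions with denominators $\leq 2^s$ differ by at least $2^{-2s}$, which is enormous compared to the arc width $2^{-2^{s/10A_0}}$; hence at most $O(2^{(d-1)s})$ numerators $\vec A$ can pair with a single admissible $B/Q$ near $\beta$. Combining the multiplicity bound $O(2^{(d-1)s})$ with the squared Weyl decay $2^{-2c_d s}$, I need $2^{(d-1)s}\cdot 2^{-2c_d s}\leq 2^{-2cs}$, i.e. $c_d$ must beat the entropy $\tfrac{d-1}{2}$; this is where I expect the main obstacle to lie, and it forces me to use not merely the crude single-fraction Weyl bound but the sharper orthogonality already cited from \cite[Lemma 6.3]{KSW}, which packages exactly the required cancellation across the family $(\vec A,Q)$ rather than treating each summand in isolation.

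Concretely, rather than summing $|S|^2$ term by term, I would invoke the Gauss/Weyl-sum orthogonality encoded in \cite[Lemma 6.3]{KSW} to bound $\sum_{(\vec A,Q):2^{s-1}\leq Q<2^s}|S(\vec A/Q,B/Q)|^2$ uniformly in $B/Q$ by $2^{-cs}$ with the correct exponent, absorbing the counting loss. With that uniform $\ell^2$ orthogonality in hand, the weight $W(\beta)$ is controlled by the single arc through $\beta$ (there is essentially one denominator's worth of arcs meeting any $\beta$, by the spacing argument), yielding $W(\beta)\lesssim 2^{-cs}$ and hence the claimed maximal estimate after taking square roots. The remaining steps — verifying arc disjointness, confirming that $\chi_s^2$ is comparable to $\chi_s$ up to harmless constants, and checking that the reduction from sup to square function costs only a fixed power that is swallowed by the exponential gain — are routine and I would relegate them to short remarks.
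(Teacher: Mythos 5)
You should know at the outset that the paper contains no proof of Lemma \ref{l:maxest} to compare against: the lemma is imported verbatim from \cite[Lemma 6.3]{KSW}, and the sentence preceding it explicitly flags it as a ``stronger orthogonality estimate'' than the pointwise Weyl bound $\sup_{(\vec{A},B,Q)=1}|S(\vec{A}/Q,B/Q)| \leq Q^{-c_d}$. Your proposal is therefore an attempt to reconstruct the argument of \cite{KSW}, and it has a genuine gap; in fact the route you chose cannot be repaired. The fatal step is the very first reduction, $\sup_{(\vec{A},Q)}|T_{\vec{A},Q}f| \leq \big(\sum_{(\vec{A},Q)}|T_{\vec{A},Q}f|^2\big)^{1/2}$: this square function is exponentially larger than the maximal function. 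Concretely, take $d=2$, so $\vec{A}=A$, and take $Q$ prime with $1 \leq A < Q$ and $B=Q$; completing the square shows the normalized Gauss sum has modulus exactly $|S(A/Q,Q/Q)| = Q^{-1/2}$, so the pointwise Weyl exponent $c_d = 1/2$ is already best possible, and for $\beta$ within $|I_s|$ of $1$ your weight obeys
\begin{align}
W(\beta) \ \geq \sum_{\substack{2^{s-1}\leq Q < 2^s \\ Q \text{ prime}}} \ \sum_{1 \leq A < Q} |S(A/Q,Q/Q)|^2\,\chi_s(\beta-1)^2 \ \gtrsim \ \frac{2^s}{s},
\end{align}
which is exponentially large, not $O(2^{-2cs})$. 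Equivalently, for $f$ with $\widehat{f}$ supported in the arc of width $|I_s|$ about $1$, one has $\sum_{(\vec{A},Q)}\|T_{\vec{A},Q}f\|_{\ell^2}^2 \gtrsim (2^s/s)\|f\|_{\ell^2}^2$ even though $\sup_{(\vec{A},Q)}|T_{\vec{A},Q}f| \lesssim 2^{-s/2}M_{\text{HL}}f$ for that same $f$; the loss occurs precisely where you replaced the supremum by the $\ell^2$ sum over the family. (You also undercount the multiplicity: for $\beta$ near a reduced fraction $b/q$ with $q$ small, \emph{every} $Q$ divisible by $q$ contributes, not a single one, which is exactly what the computation above exploits.)

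You correctly sense this obstacle (``$c_d$ must beat the entropy''), but your proposed rescue is doubly flawed. First, it is circular: \cite[Lemma 6.3]{KSW} \emph{is} Lemma \ref{l:maxest} --- it is a maximal inequality, not a statement about sums of squares of Weyl sums --- so invoking it at this point assumes what is to be proved. Second, the auxiliary claim you attribute to it, namely $\sum_{(\vec{A},Q):\,2^{s-1}\leq Q<2^s}|S(\vec{A}/Q,B/Q)|^2 \lesssim 2^{-cs}$ uniformly in $B/Q$, is false: the same Gauss-sum computation shows this sum is $\gtrsim 2^s/s$ at $B/Q=1$. The true content of the lemma is that the supremum over the $\approx 2^{ds}$ pairs $(\vec{A},Q)$ can be taken at a cost that is subexponential in $s$, and this requires exploiting cancellation \emph{between} distinct pairs --- for instance a $TT^*$/orthogonality argument in which the off-diagonal terms are again complete Weyl sums and thus small, or a Bourgain-type multi-frequency maximal inequality whose loss is polynomial in the logarithm of the number of frequencies --- combined with the single-sum decay $|S| \leq Q^{-c_d}$. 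None of this can be recovered from the pointwise Weyl bound together with Cauchy--Schwarz and arc disjointness, which is all your argument uses; those ingredients alone provably yield a bound no better than $2^{(d-1)s/2}$ in place of $2^{-cs}$.
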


With this quantitative input, we can effectively estimate $\mathcal{V}^r_{s,d} f$; the proof of the below estimate will occupy the remainder of the paper.

\begin{proposition}\label{p:final}
    There exists an absolute constant $0 < c=c_d < 1 $ so that following bound holds:
    \begin{align}
        \| \mathcal{V}^r_{s,d} f \|_{\ell^2(\mathbb{Z})} \lesssim_{\epsilon_0,\lambda}  (\frac{r}{r-2})^2 s^2 2^{-c s}  \|f \|_{\ell^2(\mathbb{Z})}.
    \end{align}
\end{proposition}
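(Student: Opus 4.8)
The plan is to linearize and then exploit a genuine multi-frequency structure. First I would use measurable selection to replace the supremum over $\vec{\lambda}$ by a single $x$-dependent choice $\vec{\lambda} = \vec{\lambda}(x)$, and to replace the inner variational supremum by a measurably chosen increasing sequence $J_0(x) < J_1(x) < \cdots$; this reduces matters to an $\ell^2$ bound for a variable-coefficient operator in which the only dependence on the truncation parameter $J$ sits inside the multipliers $\Phi_{J,\vec{\lambda} - \vec{A}/Q}^*$. The structural observation driving everything is that the frequency windows $\{ \beta : \chi_s(\beta - B/Q) \neq 0 \}$ attached to distinct rationals $B/Q$ with $2^{s-1} \leq Q < 2^s$ are pairwise disjoint once $s$ is large (small $s$ being absorbed into the constant): the window half-width $2^{-2^{s/10A_0}}$ is far smaller than the minimal separation $\gtrsim 2^{-2s}$ between Farey fractions of height below $2^s$. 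Thus $(L^s_{J,\vec{\lambda}})^\vee * f$ is a sum over disjoint frequency packets, each of which, after the modulation encoded in $\Phi_{J,\vec{\lambda}-\vec{A}/Q}^\vee$, is a smooth truncation of a Calder\'on--Zygmund kernel acting on the frequency-localized piece $(\chi_s(\cdot - B/Q)\hat{f})^\vee$.

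Second, I would establish the single-frequency input: for fixed $B/Q$ and $\vec{A}$, the $r$-variation in $J$ of the truncated, polynomially modulated singular integral associated to $\Phi_{J,\vec{\lambda}-\vec{A}/Q}^*$ obeys the classical truncated-singular-integral variation bound $O(\frac{r}{r-2})$ on $\ell^2(\mathbb{Z})$, uniformly in the modulation parameters; this is the same mechanism producing the $\mathcal{H}^r$ term in Lemma \ref{l:gettingstarted}. The Weyl coefficients are then pulled out and controlled by $\sup_{(\vec{A},B,Q)=1} |S(\vec{A}/Q,B/Q)| \leq Q^{-c_d} \lesssim 2^{-cs}$, which is the ultimate source of the decay.

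Third comes the assembly, which is the heart of the argument. I would run a short/long variation decomposition in the parameter $J$, splitting $\mathcal{V}^r$ into a long-variation piece across sparse dyadic scales and a sum of short-variation pieces within dyadic blocks, legitimate since $r > 2$. On each piece I would pass from the single-frequency bound to the full sum over the $\lesssim 2^{2s}$ disjoint windows by a randomization/orthogonality argument; because the windows are disjoint this costs only a logarithmic factor in the number of frequencies, i.e.\ a power of $s$. Collecting, the two factors of $\frac{r}{r-2}$ arise from composing the single-frequency truncation bound with the long-variation (Lépingle-type jump) inequality used in the reassembly, while the $s^2$ arises from the two accompanying logarithmic-in-frequency-count losses. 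Throughout, the supremum over denominators $(\vec{A},Q)$ with $2^{s-1} \leq Q < 2^s$ and the extraction of the decisive $2^{-cs}$ factor are supplied by Lemma \ref{l:maxest}, which already packages the Weyl-sum cancellation in the correct $\ell^2$-orthogonal form; the short-variation square functions are in turn dominated by Littlewood--Paley estimates and $O(M_{\text{HL}}f)$.

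I expect the main obstacle to be precisely this last, variable-coefficient multi-frequency variation estimate: the kernel near each window is twisted by the $x$-dependent coefficient $\vec{\lambda}(x)$, and the variation sequence $\{ J_i(x) \}$ likewise depends on $x$, so the standard fixed-frequency multi-frequency machinery of Bourgain type does not apply directly. Carrying out the orthogonalization uniformly in these measurable choices---while keeping the logarithmic-in-frequency-count loss at a power of $s$ and, crucially, not spending the $2^{-cs}$ gain coming from the Weyl sums---is where the technology of \cite{GZK} together with the orthogonality in Lemma \ref{l:maxest} must be combined. All remaining steps---the telescoping in the short variation, the square-function bound, and the elementary counting of admissible triples $(\vec{A},B,Q)$---are routine by comparison.
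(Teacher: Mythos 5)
Your proposal correctly identifies the frequency-disjointness of the windows around the rationals $B/Q$, the role of the Weyl-sum decay, and even the right stumbling block --- but it has a genuine gap at its second step, and that gap is the crux. You claim that for fixed $\vec{A}/Q$ the $r$-variation in $J$ of the polynomially modulated truncated singular integral obeys the classical $O(\frac{r}{r-2})$ bound ``uniformly in the modulation parameters,'' by ``the same mechanism producing the $\mathcal{H}^r$ term.'' This is not so: the mechanism behind $\mathcal{H}^r$ applies to the unmodulated kernel $\Psi_J$, whose pieces $\psi_j$ are mean-zero; once you multiply by $e(P_{\vec{\mu}}(\cdot))$, that cancellation is destroyed at every scale $j$ where the rescaled coefficient norm $\| P_{\vec{\mu}}(2^j \cdot)\|$ is $\gtrsim 1$, and a variation estimate uniform in $\vec{\mu}$ (let alone with $\sup_{\vec{\mu}}$ inside the norm, which is what $\mathcal{V}^r_{s,d}$ demands) is a variational Stein--Wainger theorem --- a deep result in its own right, not a citation to \cite{JSW}-type truncated-singular-integral theory; in the discrete setting it is essentially the content of the proposition you are trying to prove. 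Since your third step (the multi-frequency assembly) is built on top of this input, and since you yourself defer that assembly to an unspecified combination of \cite{GZK} with Lemma \ref{l:maxest}, the proposal as written does not contain a proof: both load-bearing steps are either asserted without justification or acknowledged as unresolved.

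For comparison, the paper never proves (and never needs) a uniform modulated variation bound. It exploits the major-arc localization: since the multipliers constrain $|\mu_k| \leq 2^{-10s}$, the scales $j \geq 2^{s/A_0}$ split into three regimes according to $\| P_{\vec{\mu}}(2^j\cdot) \|$. Where this norm is $\leq 2^{-A_1 s}$ the modulation may simply be deleted, with error $O(2^{-10s} M_{\text{HL}}f)$, reducing matters to the genuinely unmodulated operator $\mathcal{V}^r_s$; where it is intermediate there are only $O_d(s)$ scales, each bounded crudely --- no variation structure at all, just the triangle inequality --- via Lemma \ref{c:convex}, with the $2^{-cs}$ Weyl gain giving a total of $O(s 2^{-cs})$; and where it is $\geq 2^{A_1 s}$, the $TT^*$/stationary-phase decay $2^{-cl}$ beats even the trivial $2^{sd}$ count of rationals. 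The entire variational difficulty is thereby concentrated in Proposition \ref{p:lowfreqvar00}, for the unmodulated kernel, and is resolved not by randomization but by a metric chaining argument: freezing $x$ on intervals of length $|I_s|$ by Taylor expansion, covering the set of truncation values by $2^{-v}$-balls with a parent structure, and invoking the two-sided estimate \eqref{e:minest} (Cauchy--Schwarz against the $2^{2s}$ frequency count on one side, the sequence-space Corollary \ref{c:seqspace} on the other); that is where the factors $(\frac{r}{r-2})^2 s^2 2^{-cs}$ actually arise. If you want to repair your outline, the missing idea is precisely this trichotomy of scales: it converts the intractable ``uniform modulated variation'' problem into an unmodulated one plus terms controlled by counting and oscillation.
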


We let 
\[ I_s = [-2^{-2^{s/10A_0}},2^{-2^{s/10A_0}}] \]
denote the moral support of $\chi_s$; and we below we let $c=c_d > 0$ denote small constants, depending only on the degree, $d$; and so, we will often suppress the implicit dependence on $d$ in the estimates below.

With this in mind, the following two corollaries immediately present:

The first is a sequence-space estimate.
\begin{cor}\label{c:seqspace}
There exists $c > 0$ so that whenever $|I| \geq |I_s|$,
    \begin{align}
        \| \sup_{(\vec{A},Q) = 1, \ 2^{s-1} \leq Q < 2^s} | \sum_{B \leq Q} c_{B/Q} S(\vec{A}/Q,B/Q) e(B/Q x) | \|_{\ell^2(I)} \lesssim 2^{-c s} |I|^{1/2} \| c_{B/Q} \|_{\ell^2(\mathbb{Z})}.
    \end{align}
\end{cor}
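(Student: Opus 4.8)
The plan is to read Corollary~\ref{c:seqspace} as the ``frozen--coefficient'' discretization of the maximal estimate in Lemma~\ref{l:maxest}, and to extract it by transference. Write $g_{\vec A,Q}(x):=\sum_{B\le Q}c_{B/Q}S(\vec A/Q,B/Q)e(Bx/Q)$, so that the quantity to be estimated is $\|\sup_{(\vec A,Q)=1,\,2^{s-1}\le Q<2^s}|g_{\vec A,Q}|\|_{\ell^2(I)}$. For a \emph{single} pair $(\vec A,Q)$ the bound is immediate: $g_{\vec A,Q}$ is $Q$-periodic, the characters $e(B\cdot/Q)$ are orthogonal over a period with square-norm $Q$, and $|S(\vec A/Q,B/Q)|\le Q^{-c_d}$, so that $\|g_{\vec A,Q}\|_{\ell^2(I)}^2\approx \tfrac{|I|}{Q}\cdot Q\cdot Q^{-2c_d}\sum_B|c_{B/Q}|^2\lesssim |I|\,2^{-2c_d s}\|c\|_{\ell^2}^2$ once $|I|\ge Q$. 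The entire difficulty is therefore the supremum over the $\approx 2^{2s}$ pairs $(\vec A,Q)$, and this is exactly the content we import from Lemma~\ref{l:maxest}.

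The transference proceeds by choosing a test function adapted to the frequencies $\{B/Q\}$. Fix a nonnegative bump $\rho_s$ supported in an interval of width $\approx|I_s|$ with $\int\rho_s=1$, so that its inverse transform $\widehat{\rho_s}$ is spread at scale $|I_s|^{-1}$ and is close to $1$ on $[-|I_s|^{-1},|I_s|^{-1}]$; set $\widehat f(\beta):=\sum_{B/Q}c_{B/Q}\rho_s(\beta-B/Q)$. The key elementary fact is the frequency separation: distinct rationals with denominators $<2^s$ are $\ge 2^{-2s}$ apart, while $|I_s|=2\cdot 2^{-2^{s/10A_0}}\ll 2^{-2s}$ for $s$ large. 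Hence the bumps $\rho_s(\cdot-B/Q)$ have pairwise disjoint supports, the cutoff $\chi_s$ is identically $1$ on each of them, and $\|f\|_{\ell^2(\mathbb Z)}^2=\|\widehat f\|_{L^2(\mathbb T)}^2\approx |I_s|^{-1}\|c\|_{\ell^2}^2$. Applying the multiplier from Lemma~\ref{l:maxest} then selects, for each $(\vec A,Q)$, precisely the matching frequency windows, producing $\sum_{B\le Q}c_{B/Q}S(\vec A/Q,B/Q)e(Bx/Q)\,\widehat{\rho_s}(-x)=g_{\vec A,Q}(x)\widehat{\rho_s}(-x)$, which agrees with $g_{\vec A,Q}(x)$ up to a negligible error on the window $|x|\lesssim |I_s|^{-1}$. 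Lemma~\ref{l:maxest} thus bounds $\|\sup_{\vec A,Q}|g_{\vec A,Q}|\|_{\ell^2(|x|\lesssim|I_s|^{-1})}\lesssim 2^{-cs}\|f\|_{\ell^2}\lesssim 2^{-cs}|I_s|^{-1/2}\|c\|_{\ell^2}$, which is the desired inequality on a single window of length $\approx |I_s|^{-1}$.

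To pass to a general interval $I$ of length $\ge|I_s|^{-1}$, the natural transference scale, I would tile $I=\bigsqcup_j I_j$ into blocks of length $\approx|I_s|^{-1}$ centered at points $x_j$, and for each block run the same argument with the \emph{translated} test function $\widehat{f_j}(\beta):=e(-x_j\beta)\sum_{B/Q}c_{B/Q}e(Bx_j/Q)\rho_s(\beta-B/Q)$. Translation recenters the window $\widehat{\rho_s}(x_j-\cdot)$ at $x_j$ so that the multiplier output reproduces $g_{\vec A,Q}$ on $I_j$, while the compensating phases $e(Bx_j/Q)$ leave $|c_{B/Q}|$, and hence $\|f_j\|_{\ell^2}\approx|I_s|^{-1/2}\|c\|_{\ell^2}$, unchanged. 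Lemma~\ref{l:maxest} gives $\|\sup_{\vec A,Q}|g_{\vec A,Q}|\|_{\ell^2(I_j)}\lesssim 2^{-cs}|I_s|^{-1/2}\|c\|_{\ell^2}$ on each block, and squaring and summing over the $\approx |I|\,|I_s|$ blocks via orthogonality of the $\ell^2(I_j)$ pieces yields $\|\sup_{\vec A,Q}|g_{\vec A,Q}|\|_{\ell^2(I)}^2\lesssim |I|\,|I_s|\cdot 2^{-2cs}|I_s|^{-1}\|c\|_{\ell^2}^2=|I|\,2^{-2cs}\|c\|_{\ell^2}^2$, as claimed.

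The only genuinely delicate point is the transference bookkeeping in the second paragraph: one must verify that, on the support of each frequency bump $\rho_s(\cdot-B'/Q')$, the maximal multiplier indexed by $(\vec A,Q)$ retains only the term with $B/Q=B'/Q'$ (this is where the separation $2^{-2s}\gg|I_s|$ is essential), and that the tails of $\widehat{\rho_s}$ off the block $I_j$, together with the deviation $|\widehat{\rho_s}-1|$ on $I_j$, contribute only an acceptable error after summing over blocks. All of the actual oscillatory and arithmetic cancellation, namely the uniform-in-$(\vec A,Q)$ gain $2^{-cs}$, is supplied by Lemma~\ref{l:maxest}; the corollary adds nothing beyond this discretization.
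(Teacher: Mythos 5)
Your argument is correct and is in essence the paper's own proof: both read the corollary as Lemma \ref{l:maxest} applied to a test function whose Fourier transform is $\sum_{B/Q} c_{B/Q}$ times a narrow bump centred at $B/Q$, and both rest on exactly the same separation fact --- distinct rationals with denominators $< 2^s$ are $2^{-2s}$-separated, which vastly exceeds the width of $\chi_s$ --- to identify the multiplier output with the exponential sum and to compute the test function's $\ell^2$ norm by disjointness of the frequency bumps. The only difference is organizational: the paper majorizes $\mathbf{1}_I$ by a single Schwartz function $V_I$ whose Fourier transform is supported in a $|I|^{-1}$-neighbourhood of the origin, so the whole interval is handled in one application of Lemma \ref{l:maxest} (giving $\|f\|_{\ell^2} \approx |I|^{1/2}\|c_{B/Q}\|_{\ell^2}$ directly), whereas you work at the fixed spatial scale $|I_s|^{-1}$ and then tile and translate, which costs an extra summation over blocks but changes nothing essential.
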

\begin{proof}
Let $\mathbf{1}_I \leq |V_I|$ be a Schwartz function with spatial scale $I$ whose Fourier transform is supported in a $|I|^{-1}$ neighborhood of $0$, and restricting the supremum to
\[ {(\vec{A},Q) = 1, \ 2^{s-1} \leq Q < 2^s}, \]
bound
\begin{align}
&        \| \sup_{\vec{A}/Q} | \sum_{B \leq Q} c_{B/Q} S(\vec{A}/Q,B/Q) e(B/Q x) | \|_{\ell^2(I)} \\
& \leq \| \sup_{\vec{A}/Q} |\sum_{B \leq Q} c_{B/Q} V_I(x) S(\vec{A}/Q,B/Q) e(B/Q x) \|_{\ell^2(\mathbb{Z})} \\
& = \| \sup_{\vec{A}/Q} | \int e(\beta x) \sum_{B \leq Q} S(\vec{A}/Q,B/Q) \widehat{V_I}(\beta - B/Q) \big( \sum_{B \leq Q} c_{B/Q} \chi_s(\beta - B/Q) \big) \ d\beta \|_{\ell^2(\mathbb{Z})} \\
& = \| \sup_{\vec{A}/Q} |\int e(\beta x) \sum_{B \leq Q} S(\vec{A}/Q,B/Q) \chi_s(\beta - B/Q) \big( \sum_{B \leq Q} c_{B/Q} \widehat{V_I}(\beta - B/Q)  \big) \ d\beta \|_{\ell^2(\mathbb{Z})} \\
& \lesssim 2^{-cs} \| c_{B/Q} \|_{\ell^2(\mathbb{Z})} |I|^{1/2}.
\end{align}
\end{proof}

We record the following simple consequence of convexity in the below lemma. 

\begin{lemma}\label{c:convex}
    There exists $c>0$ so that whenever $\| \phi \|_{L^1(\mathbb{R})} \leq 1$
and \[
m_{\vec{\mu}} := \widehat{ \phi e(P_{\vec{\mu}}(\cdot))},\] 
the following estimate holds:
    \begin{align}
   \|  \sup_{(\vec{A},Q) = 1, \ 2^{s-1} \leq Q < 2^s, \ \vec{\mu}} |\int \sum_{B \leq Q} S(\vec{A}/Q,B/Q) (m_{\vec{\mu}} \chi_s)(\beta - B/Q) \widehat{f}(\beta) e(\beta x)|  \|_{\ell^2(\mathbb{Z})} \lesssim 2^{-cs} \|f \|_{\ell^2(\mathbb{Z})}.
    \end{align}
\end{lemma}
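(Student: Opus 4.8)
The plan is to deduce Lemma \ref{c:convex} from the maximal estimate of Lemma \ref{l:maxest} by a superposition (convexity) argument; the only new features relative to Lemma \ref{l:maxest} are the insertion of the bump $m_{\vec\mu}$ and the supplementary supremum over $\vec\mu$, and both are inexpensive. The starting point is Fourier inversion: since $m_{\vec\mu} = \widehat{\phi\, e(P_{\vec\mu}(\cdot))}$, for every frequency $\eta$ we may write
\[ m_{\vec\mu}(\eta) = \int \phi(u)\, e(P_{\vec\mu}(u))\, e(-\eta u)\, du, \]
so that $(m_{\vec\mu}\chi_s)(\eta) = \int \phi(u)\,e(P_{\vec\mu}(u))\,\big[e(-\eta u)\chi_s(\eta)\big]\,du$ expresses the twisted cutoff as an $L^1(\phi)$-superposition of modulated copies of the original cutoff $\chi_s$.

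Consequently, for fixed $(\vec A,Q)$ and $\vec\mu$ the inner operator in Lemma \ref{c:convex} factors as $\int \phi(u)\,e(P_{\vec\mu}(u))\, V^u_{\vec A/Q} f\, du$, where $V^u_{\vec A/Q}$ denotes the multiplier operator with symbol $\sum_{B\le Q} S(\vec A/Q,B/Q)\, e\big(-(\beta-B/Q)u\big)\chi_s(\beta - B/Q)$. Taking absolute values pointwise and using $|e(P_{\vec\mu}(u))| = 1$ removes the dependence on $\vec\mu$ (the supremum over $\vec\mu$ of the weight is simply $|\phi(u)|$), so that
\[ \sup_{\vec A/Q,\ \vec\mu} \Big| \int \sum_{B \le Q} S(\vec A/Q,B/Q)(m_{\vec\mu}\chi_s)(\beta - B/Q)\widehat f(\beta) e(\beta x)\,d\beta\Big| \le \int |\phi(u)|\, \sup_{\vec A/Q} |V^u_{\vec A/Q} f(x)|\, du. \]
Minkowski's integral inequality then reduces matters to the uniform-in-$u$ bound $\| \sup_{\vec A/Q}|V^u_{\vec A/Q} f|\|_{\ell^2(\mathbb{Z})} \lesssim 2^{-cs}\|f\|_{\ell^2(\mathbb{Z})}$, since $\int|\phi(u)|\,du \le 1$ absorbs the superposition.

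It remains to prove this uniform bound, and here is where I would spend the care. The operator $V^u_{\vec A/Q}$ is exactly the operator of Lemma \ref{l:maxest} with the smooth cutoff $\chi_s$ replaced by the modulated cutoff $e(-\eta u)\chi_s(\eta)$; equivalently, factoring $e(-(\beta-B/Q)u) = e(-\beta u)\, e(Bu/Q)$, it is the Lemma \ref{l:maxest} operator with the Weyl coefficients twisted to $S(\vec A/Q,B/Q)\,e(Bu/Q)$ and post-composed with the $\ell^2(\mathbb{Z})$-unitary Fourier multiplier $e(-\beta u)$. The modulation preserves the support $I_s$ of the cutoff, its $L^\infty$ size, and the magnitude $|S(\vec A/Q,B/Q)\,e(Bu/Q)| = |S(\vec A/Q,B/Q)| \le Q^{-c_d} \le 2^{-c_d(s-1)}$; since these are precisely the inputs driving the orthogonality estimate behind Lemma \ref{l:maxest} (namely \cite[Lemma 6.3]{KSW}), the same argument should yield the bound with a constant independent of $u$.

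The main obstacle is verifying this last claim of uniformity: one must check that the proof of Lemma \ref{l:maxest} does not secretly exploit smoothness of the cutoff in a way that would cost powers of $u$ (for instance through a Sobolev-type step used to pass from the linearized multiplier to the supremum over the finitely many pairs $(\vec A,Q)$), which would fail to integrate against $|\phi(u)|\,du$ for a merely $L^1$ weight. I would confirm that the relevant step sees only the cardinality $\lesssim 2^{O(s)}$ of admissible $(\vec A,Q)$ together with the single-scale Weyl magnitude, both of which are insensitive to the coefficient phases $e(Bu/Q)$ and to the global unitary $e(-\beta u)$; the latter may be kept attached to $\chi_s$ as a harmless modulation throughout. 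With this in hand the superposition closes and yields the claimed $2^{-cs}$ decay.
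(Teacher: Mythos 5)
Your proposal is correct and takes essentially the same route as the paper, which records this lemma without proof as a ``simple consequence of convexity'': the intended argument is exactly your Fourier-inversion superposition $(m_{\vec{\mu}}\chi_s)(\eta)=\int\phi(u)\,e(P_{\vec{\mu}}(u))\,e(-\eta u)\chi_s(\eta)\,du$, which eliminates the $\vec{\mu}$-supremum once absolute values are taken, followed by Minkowski's inequality and a $u$-uniform form of Lemma \ref{l:maxest}. As for the uniformity check you rightly flag, the cleanest confirmation is not that Lemma \ref{l:maxest} sees only cardinality and Weyl-sum magnitudes (its proof also exploits the arithmetic structure and the smooth envelope), but that the twist acts by pure translation: unwinding the phases $e(Bu/Q)$ and $e(-\beta u)$ against $\widehat{f}$ and using $\sum_{B\leq Q}S(\vec{A}/Q,B/Q)e(Bk/Q)=e(-P_{\vec{A}/Q}(k))$ for integer $k$ shows the twisted operator has convolution kernel $\chi_s^{\vee}(n-m-u)\,e(-P_{\vec{A}/Q}(n-m))$, i.e.\ the same oscillatory kernel with envelope shifted by $u$, so every step behind Lemma \ref{l:maxest} goes through with constants independent of $u$.
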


The proof of Proposition \ref{p:final} will largely derive from the below proposition concerning the following less oscillatory variation operators:
\begin{align}
        \mathcal{V}^r_sf(x) := \sup_{(\vec{A},Q) = 1, \ 2^{s-1} \leq Q < 2^s} \mathcal{V}^r \big( \int \sum_{B \leq Q} S(\vec{A}/Q,B/Q) \widehat{\Psi^s_J}(\beta - B/Q) \chi_s(\beta - B/Q) \hat{f}(\beta) e(\beta x) \ d\beta : J \big).
    \end{align}

\begin{proposition}\label{p:lowfreqvar00}
The following estimate holds for some $c > 0$:
\begin{align}
    \| \mathcal{V}^r_s f \|_{\ell^2(\mathbb{Z})} \lesssim_{\epsilon_0,\lambda} (\frac{r}{r-2})^2 s^2 2^{-cs} \| f \|_{\ell^2(\mathbb{Z})}.
\end{align}
\end{proposition}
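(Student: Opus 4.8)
The plan is to prove Proposition \ref{p:lowfreqvar00} by decomposing the $r$-variation into a ``long'' (lacunary) piece and a ``short'' (within-block) piece, and exploiting the arithmetic orthogonality encoded in Lemma \ref{l:maxest} and Corollary \ref{c:seqspace} to win a factor of $2^{-cs}$ at each stage. First I would observe that the multipliers defining $\mathcal{V}^r_s f$ are built from the singular-integral multipliers $\widehat{\Psi^s_J}(\beta - B/Q)\chi_s(\beta - B/Q)$ modulated by the Weyl-sum weights $S(\vec A/Q, B/Q)$, and that the frequency cutoff $\chi_s(\beta - B/Q)$ localizes each spectral block to an interval of length $|I_s|$ around the rational $B/Q$; since $2^{s-1}\le Q < 2^s$, these intervals are disjoint (their separation $\gtrsim 4^{-s}$ dominates $|I_s| = 2\cdot 2^{-2^{s/10A_0}}$), which is exactly the separation we need for a multi-frequency estimate.

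The core of the argument would be a Rademacher/randomization reduction to a single-frequency variational estimate for a truncated singular integral, combined with the arithmetic gain. Concretely, I would first bound the $r$-variation pointwise by the square function
\begin{align}
\mathcal{V}^r\big( T_J f : J\big) \leq |T_{J_0} f| + \Big( \sum_J |T_{J} f - T_{J-1} f|^2 \Big)^{1/2}
\end{align}
up to the usual $\tfrac{r}{r-2}$ loss from the short variation, where $T_J$ denotes the operator with multiplier $\sum_{B\le Q} S(\vec A/Q, B/Q)\widehat{\Psi^s_J}(\beta - B/Q)\chi_s(\beta - B/Q)$; the maximal term is controlled directly by Lemma \ref{l:maxest}. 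For the square function, the increments $\widehat{\Psi^s_J} - \widehat{\Psi^s_{J-1}} = \widehat{\psi_J}$ are supported at frequency scale $\lambda^{-J}$, so I would pass to the frequency side, use the disjointness of the blocks $\{\beta : |\beta - B/Q|\le |I_s|\}$ together with the single-frequency variation bound for the smooth truncated singular integral $\{\widehat{\Psi^s_J}\}$ (whose $\ell^2$ variation norm is $O(\tfrac{r}{r-2})$, as in the definition of $\mathcal{H}^r$ in Lemma \ref{l:gettingstarted}), and factor out the supremum over $\vec A/Q$ by appealing to Corollary \ref{c:seqspace} to extract the decay $2^{-cs}$ from the Weyl sums.

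The main obstacle I anticipate is handling the supremum over $(\vec A, Q)=1$ \emph{inside} the variation rather than outside: the variation is taken in $J$ for each fixed $\vec A/Q$, but the outer supremum over $\vec A/Q$ does not commute with the $\ell^2$ square function, so a naive triangle inequality over the $\sim 4^{s}$ admissible pairs $\vec A/Q$ would destroy the gain. The resolution is precisely the maximal multi-frequency estimate of Lemma \ref{l:maxest} and its sequence-space form Corollary \ref{c:seqspace}, which are stated uniformly over $\vec A/Q$ with a clean $2^{-cs}$ bound; the task is to linearize the supremum (choosing a measurable selector $\vec A(x)/Q(x)$) and then transfer the spatial square function over $J$ to the frequency-localized coefficient sequence $\{c_{B/Q}\}$ so that Corollary \ref{c:seqspace} applies. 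Carrying this through should yield the claimed bound with the two powers of $\tfrac{r}{r-2}$ (one from the short variation, one from the long) and the polynomial factor $s^2$ absorbing the logarithmic number of scales in the randomization, with the exponential gain $2^{-cs}$ coming entirely from the arithmetic input.
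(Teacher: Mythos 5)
Your proposal has a genuine gap, in fact two. First, the opening reduction is false: for a sequence indexed by lacunary scales $J$, the pointwise bound
\begin{align}
\mathcal{V}^r(T_J f : J) \leq |T_{J_0}f| + \Big(\sum_J |T_Jf - T_{J-1}f|^2\Big)^{1/2}
\end{align}
does not hold --- take a trajectory whose $n$ consecutive increments all have size $\epsilon$ and point in the same direction: the right-hand side is $\epsilon\sqrt{n}$ while the variation is $\epsilon n$. A square function of consecutive differences controls variation only after subtracting a martingale term (L\'{e}pingle), and here there is no ``short variation'' to hide behind: the scales $J$ are already lacunary, so the entire difficulty \emph{is} the long variation, which is precisely what a square-function bound cannot capture. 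This is why the paper argues differently: it runs a metric chaining (entropy) argument in the sequence space $\ell^2(B/Q)$, covering the trajectory $\{(\Psi_j * F_{B/Q}(x_I))_{B/Q} : j\}$ by balls of dyadic radii $2^{-v}$, organizing increments along a chaining tree via $\nu_t = \Psi_t - \Psi_{\varrho(t)}$, and proving at each level $v$ the key estimate \eqref{e:minest}, which interpolates between a trivial Cauchy--Schwarz bound $2^s \vec{N}_{2^{-v}}(x_I)^{1/r}$ (using $|\{B \leq Q < 2^s\}| \approx 2^{2s}$) and the arithmetic bound $2^{-cs}\vec{N}_{2^{-v}}(x_I)^{1/2}$ from Corollary \ref{c:seqspace}. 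The factors $(\frac{r}{r-2})^2 s^2$ emerge from this chaining, concluded as in \cite{KJump}, not from ``the logarithmic number of scales in the randomization.''

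Second, you correctly identify the main obstacle --- the selector $\vec{A}(x)/Q(x)$ sits inside the variation --- but your proposed resolution (``transfer the spatial square function over $J$ to the frequency-localized coefficient sequence'') is a restatement of the goal, not an argument. Corollary \ref{c:seqspace} applies to coefficient sequences $c_{B/Q}$ that are \emph{constant} on the interval $I$, whereas the quantities you must feed into it, $\Psi_j * F_{B/Q}(x)$, depend on $x$. The missing mechanism is the paper's freezing step: since $F_{B/Q} = \chi_s * (\mathrm{Mod}_{-B/Q}f)$ is band-limited to frequencies $\lesssim |I_s|^{-1}$ and all scales satisfy $j \geq 2^{s/A_0}$, each $\Psi_j * F_{B/Q}$ is approximately constant on any interval $I$ of length $|I_s|$, with error $O(2^{-10s}M_{\mathrm{HL}}f)$; one may therefore replace $\Psi_j * F_{B/Q}(x)$ by $\Psi_j * F_{B/Q}(x_I)$ throughout $I$, after which the only remaining $x$-dependence is in the phases $e(B/Q\,x)$ and the selectors, and Corollary \ref{c:seqspace} applies verbatim with $c_{B/Q} = \nu_t * F_{B/Q}(x_I)$. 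Without this freezing step you cannot legitimately invoke Corollary \ref{c:seqspace} at all, and without replacing your square-function reduction by a chaining (or genuine L\'{e}pingle-type) argument the variational structure is simply not controlled.
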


To prove Proposition \ref{p:lowfreqvar00}, we linearize our supremum via  measurable functions $\vec{A}, Q: \mathbb{Z} \to \mathbb{Z}^{d-1} \times \mathbb{Z}_{\geq 1}$ and express
\begin{align}
    &\mathcal{V}^r_s f(x) \\
    &\equiv \mathcal{V}^r\Big(  \sum_{B \leq Q(x)} S(\vec{A}(x)/Q(x),B/Q(x)) \widehat{\Psi^s_{J}}(\beta - B/Q(x)) \chi_s(\beta - B/Q(x)) \hat{f}(\beta) e(\beta x) \ d\beta : J \Big).
\end{align}
We emphasize that $J \geq 2^{s/A_0}$ and all scales involved in the definition of $\Psi_J^s$ are similarly $\geq 2^{s/A_0}$, and that our estimates below will be independent of this particular choice of linearization. With this in mind, for notational we will suppress the superscript $\Psi_J^s \longrightarrow \Psi_J$, and will similarly suppress the dependence on ${\epsilon_0,\lambda}$.

\begin{proof}[Proof of Proposition \ref{p:lowfreqvar00}]
Set $F_{B/Q}(x) := \chi_s * (\text{Mod}_{-B/Q} f)(x)$ so that we may express the foregoing as variation with respect to the operator
\begin{align}
\sum_{B \leq Q, \ 2^{s-1} \leq Q < 2^s} e(B/Q x) \Psi_j * F_{B/Q}(x) \cdot S(\vec{A}(x)/Q(x),B/Q) \mathbf{1}_{B/Q \in \{ B/Q(x) : B \leq Q(x) \}}.
\end{align}
Note that whenever $|I| = |I_s|$ is an interval, for any $x_I, y_I \in I$ we may Taylor expand
\begin{align}
&\sum_{B \leq Q, \ 2^{s-1} \leq Q < 2^s} e(B/Q x) \Psi_j * F_{B/Q}(x) S(\vec{A}(x)/Q(x),B/Q) \mathbf{1}_{B/Q \in \{ B/Q(x)\}} \\
&= \sum_{B \leq Q, \ 2^{s-1} \leq Q < 2^s} e(B/Q x) \Psi_j * F_{B/Q}(x_I) S(\vec{A}(x)/Q(x),B/Q) \mathbf{1}_{B/Q \in \{ B/Q(x)\}} + O(2^{-10 s} M_{\text{HL}} f(y_I))
\end{align}
since all scales involved satisfy $j \geq 2^{s/A_0}$.

With $x_I$ fixed, we apply metric chaining to the set
\begin{align}
X(x_I) := \{ \big( \Psi_j*F_{B/Q}(x_I) \big)_{B \leq Q, \ 2^{s-1} \leq Q < 2^s} : j \geq 2^{s/A_0}\}.
\end{align}
In particular, let
\begin{align}
    \vec{N}_\lambda(x_I)
\end{align}
denote the $\ell^2(B/Q)$-jump-counting function associated to the set $X(x_I)$, namely
\begin{align}
    \vec{N}_\lambda(x_I) := &\sup\{ K : \text{ there exists } k_0 < k_1 < \dots < k_K \text{ so that} \\
    & \| \Psi_{k_i} * F_{B/Q}(x_I) - \Psi_{k_{i-1}} * F_{B/Q}(x_I) \|_{\ell^2(B/Q)} \geq \lambda \},
\end{align}
let
\begin{align}
    \mathcal{F}_s f(x) := \sup_{j \geq 2^{s/A_0}} \big( \sum_{B \leq Q, \ 2^{s-1} \leq Q < 2^s} |\Psi_j*F_{B/Q}(x)|^2 \big)^{1/2}
\end{align}
and
\begin{align}
    \mathcal{V}^r f(x) := \sup \big( \sum_{i} \| \Psi_{j_i}*F_{B/Q}(x_I) - \Psi_{j_{i+1}}*F_{B/Q}(x) \|_{\ell^2(B/Q)}^r \big)^{1/r}
\end{align}
where the supremum runs over all finite increasing subsequences. Note that
\begin{align}\label{e:bounds}
    \| \mathcal{F}_sf \|_{\ell^2(\mathbb{Z})} + \frac{r-2}{r} \| \mathcal{V}^r f\|_{\ell^2(\mathbb{Z})} \lesssim \| f \|_{\ell^2(\mathbb{Z})}
\end{align}
by standard singular integral estimates, see \cite[Theorem 1.2]{JSW}, and Magyar-Stein-Wainger Transference.

Now, for each $v$ so that 
\[ 2^{-v} \leq \text{diam}(X(x_I)) \leq 2 \mathcal{F}_sf(x_I),\]
define $\Lambda_v(x_I)$ to be a collection of times $t$ so that
\begin{align}
    X(x_I) \subset \bigcup_{t \in \Lambda_v(x_I)} \{ (b_{B/Q})_{B/Q} : \| b_{B/Q} - \Psi_t*F_{B/Q}(x_I) \|_{\ell^2(B/Q)} \leq 2^{-v} \},
\end{align}
and for each $t \in \Lambda_v(x_I)$ define the \emph{parent} of $t$, $\varrho(t) \in \Lambda_{v-1}(x_I)$ to be the minimal time so that
\begin{align}
    &\{ (b_{B/Q})_{B/Q} : \| b_{B/Q} -\Psi_t*F_{B/Q}(x_I) \|_{\ell^2(B/Q)} \leq 2^{-v} \} \\
    & \qquad \cap 
    \{ (b_{B/Q})_{B/Q} : \| b_{B/Q} - \Psi_{\varrho(t)}*F_{B/Q}(x_I) \|_{\ell^2(B/Q)} \leq 2^{1-v} \} \neq \emptyset.
\end{align}
Set
\begin{align}
    \nu_{t} := \Psi_t - \Psi_{\varrho(t)}.
\end{align}

Then for $x,y \in I$
\begin{align}
    &\mathcal{V}^rf(x) \leq \sum_{2^{-v} \leq 2 \mathcal{F}_s(x_I)} \mathcal{V}^r\Big(  \sum_{B \leq Q(x)} e(B/Q(x) x) S(\vec{A}(x)/Q(x),B/Q(x)) \nu_t*F_{B/Q(x)}(x_I)  : t \in \Lambda_v(x_I)  \Big) \\
    & \qquad + 2^{-10s} M_{\text{HL}} f(y) \\
    & \leq \sum_{2^{-v} \leq 2 \mathcal{F}_s(x_I)} \big( \sum_{t \in \Lambda_v(x_I)} | \sum_{B \leq Q(x)} e(B/Q(x) x) S(\vec{A}(x)/Q(x),B/Q(x)) \nu_t*F_{B/Q(x)}(x_I)|^r \big)^{1/r}  \\
    & \qquad + 2^{-10s} M_{\text{HL}} f(y).
\end{align}
The argument now concludes as in the proof of \cite[Theorem 1.4]{KJump}, with the key estimate being
\begin{align}\label{e:minest}
&\| \big( \sum_{t \in \Lambda_v(x_I)} | \sum_{B \leq Q(x)} e(B/Q(x) x) S(\vec{A}(x)/Q(x),B/Q(x)) \nu_t*F_{B/Q(x)}(x_I)|^r \big)^{1/r} \|_{\ell^2(I)} \\
& \lesssim  2^{-cs} 2^{-v} |I|^{1/2} \min\{ 2^s \vec{N}_{2^{-v}}(x_I)^{1/r}, \vec{N}_{2^{-v}}(x_I)^{1/2} \};
\end{align}
the first estimate follows Cauchy-Schwartz and pointwise considerations, 
noting that 
\begin{align}
|\{ B \leq Q \leq 2^{s} \}| \approx 2^{2s},
\end{align}
while the second estimate follows from replacing the $\ell^r$ sum with the stronger $\ell^2$ sum, and then applying Corollary \ref{c:seqspace}.
\end{proof}

With this in hand, we can quickly prove Proposition \ref{p:final}.

\begin{proof}[The Proof of Proposition \ref{p:final}]
We will use the \emph{polynomial coefficient norm},
\[ \| P \| := \sum_{j \geq 1} |\lambda_j|, \; \; \; P(t) := \sum_{j\geq 0} \lambda_j t^j \in \mathbb{R}[\cdot] \]
to organize our analysis.

Let $A_1$ be a large constant, and notice that for each $\vec{\mu}$, note that there are only $O(d^2 A_1)$ many scales $j$ so that there exists $2 \leq k \neq k' \leq d$
\begin{align}
    2^{-A_1} \leq \frac{ |\mu_k| 2^{jk}}{|\mu_{k'}| 2^{jk'}} \leq 2^{A_1};
\end{align}
collect these scales in the set $\mathcal{J}_0(\vec{\mu})$.

For each $\vec{\mu} \in [2^{-10s}]^{d-1}$, we partition our set of scales into three sets: let
\begin{align}
    \mathcal{J}_{\leq}(\vec{\mu}) &:= \{ j \notin \mathcal{J}_0(\vec{\mu}): \| P_{\vec{\mu}}(2^j \cdot) \| \leq 2^{-A_1 s} \} \\
    \mathcal{J}_{\approx}(\vec{\mu}) &:= \mathcal{J}_0(\vec{\mu}) \cup \{ j : 2^{-A_1 s} \leq \| P_{\vec{\mu}}(2^j \cdot) \| \leq 2^{A_1 s} \} \; \; \; \text{ and} \\
    \mathcal{J}_{\geq}(\vec{\mu}) &:= \{ j \notin \mathcal{J}_0(\vec{\mu}): \| P_{\vec{\mu}}(2^j \cdot) \| \geq 2^{A_1 s} \}.
\end{align}
Note that for each $j \notin \mathcal{J}_{0}(\vec{\mu})$, there exists a unique $k = k(j)$ so that
\begin{align}
    \| P_{\vec{\mu}}(2^j \cdot) \| \approx |\mu_k| 2^{kj},
\end{align}
so in particular 
\[ \sup_{\vec{\mu}} |\mathcal{J}_{\approx}(\vec{\mu})| \lesssim_d s \]
and $\mathcal{J}_{\leq}(\vec{\mu}), \mathcal{J}_{\geq}(\vec{\mu})$ have at most $O_d(1)$ many connected components. More to the point, if we let
\begin{align}
    \mathcal{J}_l(\vec{\mu}) := \{ j \notin \mathcal{J}_0(\vec{\mu}) : \| P_{\vec{\mu}}(2^j \cdot) \| \approx 2^l \}
\end{align}
then
\begin{align}
    \sup_{\vec{\mu}, l \geq - A_1 s} |\mathcal{J}_l(\vec{\mu})| \lesssim_d 1.
\end{align}

With this in mind, set
\[ \phi_{j,\vec{\mu}}(t) := \psi_j(t) e(P_{\vec{\mu}}(t)), \]
and, abbreviating
\[ \sup_{(\vec{A},Q)= 1, \ 2^{s-1} \leq Q < 2^s}  \longrightarrow \; \; \; \sup_{\vec{A}/Q},\]
bound
\begin{align}
    &\mathcal{V}^r_{s,d} f(x) \\
    & \leq 
\mathcal{V}^r_s f(x) + \sup_{\vec{A}/Q,  \vec{\mu}} \sum_{B \leq Q} \sum_n |f(x-n)| \Big( \sum_{j \in \mathcal{J}_{\leq}(\vec{\mu})} \int |\chi_s^{\vee}(n-t)| |\psi_j(t)| |e(P_{\vec{\mu}}(t)) - 1| \ dt  \Big) \\
& + \sup_{\vec{A}/Q, \vec{\mu}} \sum_{j \in \mathcal{J}_{\approx}(\vec{\mu})} |\int \sum_{B \leq Q} S(\vec{A}/Q,B/Q) \phi_{j,\vec{\mu}}(\beta - B/Q) \chi_s(\beta - B/Q) \hat{f}(\beta) e(\beta x) \ d\beta| \\
    & + \sum_{l \geq A_1 s} \sup_{\vec{A}/Q, \vec{\mu}} \sum_{j \in \mathcal{J}_l(\vec{\mu})} |\int \sum_{B \leq Q} S(\vec{A}/Q,B/Q) \phi_{j,\vec{\mu}}(\beta - B/Q) \chi_s(\beta - B/Q) \hat{f}(\beta) e(\beta x) \ d\beta| \\
    & \leq \mathcal{V}^r_s f(x) + 2^{-10s} M_{\text{HL}} f(x) \\
&    + \sup_{\vec{A}/Q, \vec{\mu}} \sum_{j \in \mathcal{J}_{\approx}(\vec{\mu})} |\int \sum_{B \leq Q} S(\vec{A}/Q,B/Q) \phi_{j,\vec{\mu}}(\beta - B/Q) \chi_s(\beta - B/Q) \hat{f}(\beta) e(\beta x) \ d\beta| \\
& + \sum_{\vec{A}/Q} \sum_{B \leq Q} \sum_{l \geq A_1 s} \sup_{\vec{\mu}} \sum_{j \in \mathcal{J}_l(\vec{\mu})} |\int \phi_{j,\vec{\mu}}(\beta - B/Q) \chi_s(\beta - B/Q) \hat{f}(\beta) e(\beta x) \ d\beta|;     
\end{align}
in the first step we have expanded
\begin{align}
    \psi_j(t) e(P_{\vec{\mu}}(t)) = \psi_j(t) + \psi_j(t) \big( e(P_{\vec{\mu}}(t)) - 1 \big).
\end{align}
The first term is governed by Proposition \ref{p:lowfreqvar00}, the sum over $\mathcal{J}_{\approx}(\vec{\mu})$ contribute $O_d(s)$ many scales, so by Lemma \ref{c:convex} the total contribution from these scales is $O(s  2^{-cs})$. It remains only to address the contribution from the scales in $\mathcal{J}_l(\vec{\mu})$; by conceding a constant factor, we can assume for that each $\vec{\mu}$, there exists a unique such scale, call it $j_l(\vec{\mu})$. By modulating, it suffices to estimate
\begin{align}
    \| \sup_{\vec{\mu}} |\int (\chi_s*f)(x-t) \psi_{j_l(\vec{\mu})}(t) e(P_{\vec{\mu}}(t)) \ dt| \|_{\ell^2(\mathbb{Z})};
\end{align}
by Magyar-Stein-Wainger transference and $TT^*$, see \cite[\S 14]{BOOK}, there exists an absolute constant $c = c_d  > 0$ so that, uniformly in $l \geq 1$,
\begin{align}
        \| \sup_{\vec{\mu}} |\int (\chi_s*f)(x-t) \psi_{j_l(\vec{\mu})}(t) e(P_{\vec{\mu}}(t)) \ dt| \|_{\ell^2(\mathbb{Z})} \lesssim 2^{-cl} \|f \|_{\ell^2(\mathbb{Z})},
\end{align}
so
\begin{align}
    &\| \sum_{\vec{A}/Q} \sum_{B \leq Q} \sum_{l \geq A_1 s} \sup_{\vec{\mu}} \sum_{j \in \mathcal{J}_l(\vec{\mu})} |\int \widehat{ \psi_j e(P_{\vec{\mu}}(\cdot)) }(\beta - B/Q) \chi_s(\beta - B/Q) \hat{f}(\beta) e(\beta x) \ d\beta)| \|_{\ell^2(\mathbb{Z})} \\
    & \lesssim 2^{sd} \sum_{l \geq A_1 s} 2^{-c l} \| f \|_{\ell^2(\mathbb{Z})} \leq 2^{-s} \|f \|_{\ell^2(\mathbb{Z})},
\end{align}
provided $A_1$ is chosen sufficiently large. Putting everything together, we have bounded
\begin{align}
    \| \mathcal{V}^r_{s,d} f \|_{\ell^2(\mathbb{Z})} \lesssim \big( (\frac{r}{r-2})^2 s^2 2^{-cs} + s 2^{-cs} + 2^{-s} \big) \|f \|_{\ell^2(\mathbb{Z})},
\end{align}
as desired.
\end{proof}

\end{document}